\theoremstyle{plain}
\newtheorem{theorem}{Theorem}[section]
\newtheorem{lemma}[theorem]{Lemma}
\theoremstyle{definition}
\begin{document}

\title[Nonlinear maps preserving Jordan $\eta$-$\ast$-$n$-products]
{Nonlinear maps preserving Jordan $\eta$-$\ast$-$n$-products}

\author{Wenhui Lin}

\address{Lin: College of Science, China Agricultural University, 100083, Beijing, P. R. China}

\email{whlin@cau.edu.cn}

\begin{abstract}
Let $\eta\neq -1$ be a non-zero complex number, and let $\phi$ be a not necessarily linear 
bijection between two von Neumann algebras, one of which has no central abelian projections preserving the Jordan $\eta$-$\ast$-$n$-product. It is showed that $\phi$ is a linear $\ast$-isomorphism if $\eta$ is not real and $\phi$ is the sum of a 
linear $\ast$-isomorphism and a conjugate linear $\ast$-isomorphism if $\eta$ is real.
\end{abstract}

\subjclass[2010]{47B47, 46K15}

\keywords{Jordan $\eta$-$\ast$-$n$-product, Isomorphism, von Neumann algebras}


\date{\today}

\maketitle


\section{Introduction}
\label{xxsec1}

Let $\mathcal{A}$ be a $C^{\ast}$-algebra. For a non-zero scalar $\eta \in \Bbb{C}$, the Jordan $\eta$-$\ast$-product of two elelments $A, B\in \mathcal{A}$ is denoted by $A \Diamond_{\eta} B=AB+\eta BA^\ast$. In recent years, an intense research activity has been addressed to study not necessarily linear mappings between von Neumann algebras preserving the $\eta$-$\ast$-product or some of its variants. The origins of the Jordan $\eta$-$\ast$-product go back to  \cite{Semrl}, where P. Semrl introduced and studied the Jordan (-1)-$\ast$-product in relation to quadratic functionals. More recently, Z. Bai and S.P. Du \cite{Bai} established that any bijective map between von Neumann algebras without central abelian projections preserving the  Jordan (-1)-$\ast$-product is a sum of linear and conjugate linear $\ast$-isomorohisms. In \cite{Li}, they proved that a not necessarily linear bijective mapping $\Phi$ between von Neumann algebras preserves the Jordan 1-$\ast$-product if and only if it is a $\ast$-ring isomorphisms. As a corollary, they observe that if the von Neumann algebras are type I factors, then $\Phi$ is a unitary isomorphism or a conjugate unitary isomorphism. In 2014, L. Q. Dai and F. Y. Lu \cite{Dai} generalized the above mentioned result by Bai and Du, by describing all bijective not necessarily linear maps $\Phi$ between two von Neumann algebras, one of which has no central abelian projections, that preserve the Jordan $\eta$-$\ast$-product.The concrete description shows that one of the following statement holds:

\begin{enumerate}
\item[(a)] if $\eta \in \Bbb{R}$, then $\Phi$ is a sum of a linear $\ast$-isomorphism and a conjugate linear $\ast$-isomorphism,

\item[(b)]  if $\eta \notin \Bbb{R}$, then $\Phi$ is a linear $\ast$-isomorphism.
\end{enumerate}

See \cite{Cui, Huo1, Ji, Lu} for other related results. In \cite{Huo2}, they consider the Jordan triple $\eta$-$\ast$-product of three element $A,B$ and $C$ in a $C^{\ast}$-algebra $\mathcal{A}$ defined by  $A \Diamond_{\eta} B \Diamond_{\eta} C=(A \Diamond_{\eta} B) \Diamond_{\eta} C$. A not necessarily linear map $\Phi$ between $C^{\ast}$-algebra $\mathcal{A}$ and $\mathcal{B}$ preserves Jordan triple $\eta$-$\ast$-product if
$$
\Phi(A \Diamond_{\eta} B \Diamond_{\eta} C)=\Phi(A) \Diamond_{\eta} \Phi(B) \Diamond_{\eta} \Phi(C)
$$
for every $A, B, C \in \mathcal{A}$. Clearly a map between $C^{\ast}$-algebra preserving the Jordan $\eta$-$\ast$-product also preserves the Jordan triple $\eta$-$\ast$-product. The main result of \cite{Huo2} proves the following: let $\mathcal{A}$ and $\mathcal{B}$ be two von Neumann algebras, one of which has no central abelian projections, let $\eta \neq -1$ be a non-zero complex number, and let $\phi:\mathcal{A} \rightarrow \mathcal{B}$ be a not necessarily linear bijection with $\phi(I)=I$. Then $\phi$ preserves the Jordan triple $\eta$-$\ast$-product if and only if one of the following statement holds:

\begin{enumerate}
\item[(a)] $\eta \in \Bbb{R}$ and there exists a central projection $p\in \mathcal{A}$ such that $\phi(p)$ is a central projection in $\mathcal{B}$,$\phi\mid_{\mathcal{A}p}:\mathcal{A}p\rightarrow \mathcal{B}\phi(p)$ is a linear $\ast$-isomorphism and $\phi\mid_{\mathcal{A}(I-p)}:\mathcal{A}(I-p)\rightarrow \mathcal{B}(I-\phi(p))$ a conjugate linear $\ast$-isomorphism,

\item[(b)]  $\eta \notin \Bbb{R}$ and $\phi$ is a linear $\ast$-isomorphism.
\end{enumerate}

But Huo et al. \cite{Huo2} prove these conclusions heavily depend on the assumption $\phi(I)=I$. In this paper, we not only generalize the corresponding conclusions to tye-$n$, but also abolish this condition.

Given the consideration of Jordan $\eta$-$\ast$-product and Jordan triple $\eta$-$\ast$-product, we can 
further develop them in one natural way. Suppose that $n\geq 2$ is a fixed positive
integer. Let us see a sequence of polynomials with Jordan $\eta$-$\ast$(where we should be aware that $\Diamond_{\eta}$ is not necessarily associative)
$$
\begin{aligned}
p_2(x_1,x_2)&=x_1\Diamond_{\eta} x_2=x_1x_2+x_2x_1^\ast,\\
p_3(x_1,x_2,x_3)&=p_2(x_1,x_2) \Diamond_{\eta} x_3=(x_1\Diamond_{\eta} x_2) \Diamond_{\eta} x_3\\
&=:x_1\Diamond_{\eta} x_2 \Diamond_{\eta} x_3,\\
p_4(x_1,x_2,x_3,x_4)&=p_3(x_1,x_2,x_3) \Diamond_{\eta} x_4=((x_1\Diamond_{\eta} x_2) \Diamond_{\eta} x_3 \Diamond_{\eta} x_4\\
&=:x_1\Diamond_{\eta} x_2 \Diamond_{\eta} x_3 \Diamond_{\eta} x_4,\\
\cdots &\cdots,\\
p_n(x_1,x_2,\cdots,x_n)&=p_{n-1}(x_1,x_2,\cdots,x_{n-1}) \Diamond_{\eta} x_n\\
&=\underbrace{(\cdots ((}_{n-2}x_1 \Diamond_{\eta} x_2)\Diamond x_3)\Diamond_{\eta} \cdots \Diamond_{\eta} x_{n-1}) \Diamond_{\eta} x_n\\
&=:x_1\Diamond_{\eta} x_2 \Diamond_{\eta} \cdots\Diamond_{\eta} x_n.
\end{aligned}
$$
Accordingly, a \textit{nonlinear map} $\phi:\mathcal{A} \longrightarrow \mathcal{B}$ between $C^{\ast}$-algebra $\mathcal{A}$ and $\mathcal{B}$ preserves Jordan $\eta$-$\ast$-$n$-products if
$$
\phi(x_1\Diamond_{\eta} x_2 \Diamond_{\eta} \cdots\Diamond_{\eta} x_n)=\phi(x_1)\Diamond_{\eta} \phi(x_2) \Diamond_{\eta} \cdots\Diamond_{\eta}\phi( x_n)
$$
for all $x_1,x_2,\cdots,x_n\in \mathcal{A}$. 

In the following of this paper, we usually choose the notation 
$$
\phi(p_n(x_1,x_2,\cdots,x_n))=p_n(\phi(x_1),\phi(x_2),\cdots,\phi(x_n))
$$
instead of the above identity. This notion makes the best use of the definition of Jordan $\eta$-$\ast$-$n$-products. By the definition, it is clear that every Jordan $\eta$-$\ast$-product is a 
$\eta$-$\ast$-$2$-product and every Jordan triple $\eta$-$\ast$-product is a Jordan $\eta$-$\ast$-$3$-product.

Motivated by the afore-mentioned works, we will concentrate on giving a description of nonlinear Jordan $\eta$-$\ast$-$n$-products on von Neumann algebras. The framework of this paper is as follows. We recall and collect some 
indispensable facts with respect to Jordan $\eta$-$\ast$-$n$-products on von Neumann algebras in the second section \ref{xxsec2}. The third 
Section \ref{xxsec3} is to provide a detailed proof the additivity of Jordan $\eta$-$\ast$-$n$-products on von Neumann algebras \ref{6}.  The forth Section \ref{xxsec4} is to prove our main result \ref{Th2}.Let $\eta\neq -1$ be a non-zero complex number, and let $\phi$ be a not necessarily linear 
bijection between two von Neumann algebras, one of which has no central abelian projections preserving the Jordan $\eta$-$\ast$-$n$-product. It is showed that $\phi$ is a linear $\ast$-isomorphism if $\eta$ is not real and $\phi$ is the sum of a 
linear $\ast$-isomorphism and a conjugate linear $\ast$-isomorphism if $\eta$ is real. The last Section \ref{xxsec5} is devoted to certain potential topics in this vein for the future.

\section{Notations and Preliminaries}
\label{xxsec2}

Before beginning detailed demonstration and stating our main result, we need to give some notations and preliminaries. Throughout the paper, all algebras and spaces are defined over the field $\Bbb{C}$ of complex numbers. A von Neumann algebra $\mathcal{A}$ is weakly closed and self-adjoint algebra of operators on a Hilbert space $\mathcal{H}$ containin the identity operator $I$. The set $\mathcal{Z}_{\mathcal{A}}=\{S\in \mathcal{A}\mid ST=TS, \forall T\in \mathcal{A} \}$ is called the center of $\mathcal{A}$. A projection $P$ is called a central abelian projection if $P \in \mathcal{Z}_{\mathcal{A}}$ and $P\mathcal{A}P$ is abelian. For $A \in \mathcal{A}$, the central carrier of $A$, denoted by $\overline{A}$, is the smallest central projection $P$ with $PA=A$. It is not difficult to see that $\overline{A}$ is the projection onto the closed subspace spanned by $\{BAx:B \in \mathcal{A}, x \in  \mathcal{H}\}$. Let $Q$ be a projection in $\mathcal{A}$. The core of $Q$, denoted by $\underline{Q}$, is the biggest central projection $P$ with $P \leqslant Q$. If $\underline{Q}=0$, we then call $Q$ a core-free projection. It is easy to verify that $\underline{Q}=0$ if and only if $\overline{I-Q}=I$, where $I$ is the identity operator. A self-adjoint element $A$ of $\mathcal{A}$ is called positive if its spectrum $\sigma(A)$ consists of non-negative real numbers. Moreover, an element $A$ of $\mathcal{A}$ is called positive if and only if there exists $B$ in $\mathcal{A}$ with $A=B^{\ast}B$. Especially, if $B$ is a self-adjoint operator, then $A=B^2$.

\begin{lemma}\label{1}{\rm \cite[Lemma 14]{Miers}}
Let $\mathcal{A}$ be a von Neumann algebra without central abelian projections. Then there exists a projection $P$ with $\underline{P}=0$ and $\overline{P}=I$.
\end{lemma}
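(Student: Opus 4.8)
The assertion is equivalent to producing a projection $P\in\mathcal{A}$ with $\overline{P}=I$ and $\overline{I-P}=I$, since $\underline{P}=0$ if and only if $\overline{I-P}=I$. The plan is a patching argument. Call a pair $(Z,E)$ \emph{good} if $Z$ is a central projection of $\mathcal{A}$, $E\leq Z$ is a projection, and $\overline{E}=Z=\overline{Z-E}$. Two standard facts drive the proof: the central carrier is monotone, i.e.\ $E\leq F$ implies $\overline{E}\leq\overline{F}$, and it is invariant under Murray--von Neumann equivalence, i.e.\ $E\sim F$ implies $\overline{E}=\overline{F}$. Consequently, if $\{(Z_i,E_i)\}_{i\in\mathcal{I}}$ is a family of good pairs whose central projections $Z_i$ are pairwise orthogonal, then $\big(\sum_i Z_i,\sum_i E_i\big)$ is again good: writing $Z=\sum_i Z_i$, the sum $\sum_i E_i$ is a projection dominated by $Z$ (the $E_i$ are pairwise orthogonal since $E_i\leq Z_i$), while $\overline{\sum_i E_i}\geq\overline{E_i}=Z_i$ for every $i$ forces $\overline{\sum_i E_i}=\bigvee_i Z_i=Z$, and the same argument applied to $Z-\sum_i E_i=\sum_i(Z_i-E_i)$ gives $\overline{Z-\sum_i E_i}=Z$. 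A second elementary point is that passing to a central corner preserves the hypothesis: if $W$ is a central projection, then $\mathcal{A}W$ has no central abelian projections, since any such would be a central abelian projection of $\mathcal{A}$.

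By Zorn's Lemma (the union of a chain of pairwise-orthogonal families of good pairs is again such a family) choose a maximal pairwise-orthogonal family $\{(Z_i,E_i)\}_{i\in\mathcal{I}}$ of good pairs, and set $Z_0=\sum_i Z_i$ and $P=\sum_i E_i$, so that $(Z_0,P)$ is good. I claim $Z_0=I$, which finishes the proof, since then $\overline{P}=I$ and $\overline{I-P}=\overline{Z_0-P}=I$. Suppose instead $Z_0\neq I$ and put $\mathcal{B}=\mathcal{A}(I-Z_0)$, a nonzero von Neumann algebra with no central abelian projections; in particular $\mathcal{B}$ is not abelian, for otherwise $I-Z_0$ itself would be a central abelian projection of $\mathcal{A}$. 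Since a von Neumann algebra is generated by its projections, $\mathcal{B}$ contains two non-commuting projections $E,F$; then $X:=EF(I-E)$ is nonzero, because $X=0$ would give $EF=EFE=(EF)^{\ast}$, hence $EF=FE$. Carrying out the polar decomposition $X=V|X|$ inside $\mathcal{B}$ yields nonzero projections $F_1=V^{\ast}V\leq I-E$ and $F_2=VV^{\ast}\leq E$ with $F_1\sim F_2$ and $F_1\perp F_2$.

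Let $W=\overline{F_1}=\overline{F_2}$, a nonzero central projection with $W\leq I-Z_0$. Then $(W,F_1)$ is good: indeed $\overline{F_1}=W$ by definition of $W$, and since $F_2\leq W-F_1$ we get $\overline{W-F_1}\geq\overline{F_2}=W$, hence $\overline{W-F_1}=W$. Moreover $W\leq I-Z_0$ is orthogonal to every $Z_i$, so $\{(Z_i,E_i)\}_{i\in\mathcal{I}}\cup\{(W,F_1)\}$ is a strictly larger pairwise-orthogonal family of good pairs, contradicting maximality. Therefore $Z_0=I$ and $P=\sum_i E_i$ is the required projection.

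The argument is largely bookkeeping; its one substantive ingredient is the local step that a non-abelian von Neumann algebra contains a pair of orthogonal, Murray--von Neumann equivalent, nonzero projections, together with the stability of central carriers under orthogonal sums and under equivalence. The point I expect to be worth emphasizing is that one should \emph{not} attempt to halve $I$ (which is already impossible for $\mathcal{A}=M_3(\mathbb{C})$): it suffices that one ``small'' projection and its complement each have full central carrier, and a nontrivial projection in a factor that is not one-dimensional already does this, the corner argument then globalizing it over the center. Alternatively, the same conclusion can be reached through the type decomposition directly---a field of rank-one projections on the finite type $I$ summand (which is a direct sum of homogeneous type $I_n$ pieces with $n\geq 2$) together with a halving projection on each of the type $I_\infty$, $II$ and $III$ summands---but the Zorn argument above avoids any appeal to the structure theory.
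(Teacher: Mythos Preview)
Your argument is correct. The paper does not supply its own proof of this lemma; it is simply quoted from \cite[Lemma~14]{Miers}, so there is no in-paper proof to compare against. Your Zorn-based patching argument is a standard and sound way to establish the result: the local step (producing two orthogonal, Murray--von Neumann equivalent, nonzero projections in any non-abelian von Neumann algebra via $X=EF(I-E)$ and its polar decomposition) is correct, the stability of good pairs under orthogonal central sums is verified properly, and the maximality contradiction goes through because central carriers computed in the corner $\mathcal{A}(I-Z_0)$ coincide with those computed in $\mathcal{A}$. The closing remark that one need not halve $I$ is well taken and worth keeping.
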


\begin{lemma}\label{2}{\rm \cite[Lemma 1.2]{Dai}}
Let $\mathcal{A}$ be a von Neumann algebra without central abelian projections. Then there exists a projection $P$ with $\underline{P}=0$ and $\overline{P}=I$.
\begin{enumerate}
\item[(1)] If $ABP=0$ for all $B \in \mathcal{A}$, then $A=0$;

\item[(2)] If $\eta$ is a non-zero scalar and $(PT(I-P))\Diamond_{\eta} A=0$ for all $T \in \mathcal{A}$, then $A(I-P)=0$.
\end{enumerate}
\end{lemma}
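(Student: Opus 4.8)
The statement bundles together three things: the existence of the projection $P$, and the two annihilator conclusions (1) and (2). The plan is to settle the existence of $P$ with $\underline{P}=0$ and $\overline{P}=I$ by invoking Lemma \ref{1}, then to build (1) directly on the definition of the central carrier, and finally to use (1) as the lever for (2). The remark I would isolate at the outset is that $\overline{P}=I$ means precisely that the closed linear span of $\{BPx : B\in\mathcal{A},\ x\in\mathcal{H}\}$ equals $\mathcal{H}$.

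Granting this, part (1) is immediate: if $ABP=0$ for every $B\in\mathcal{A}$, then the bounded operator $A$ annihilates the range of $BP$ for each $B$, hence the closed span of all these ranges, which by the previous remark is $\mathcal{H}$; so $A=0$. This is the only point at which the hypothesis that $\mathcal{A}$ has no central abelian projections enters (through $\overline{P}=I$), and (1) is the engine for (2).

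For part (2) I would unfold the definition of $\Diamond_{\eta}$: the hypothesis reads, for every $T\in\mathcal{A}$,
\[
0=(PT(I-P))\Diamond_{\eta}A=PT(I-P)A+\eta\,A(I-P)T^{\ast}P .
\]
The first summand is linear in $T$ and the second conjugate-linear in $T$, so substituting $iT$ for $T$ and using $(iT)^{\ast}=-iT^{\ast}$ gives, after dividing by $i$,
\[
0=PT(I-P)A-\eta\,A(I-P)T^{\ast}P .
\]
Subtracting the two identities isolates the second summand: $\eta\,A(I-P)T^{\ast}P=0$ for all $T$. Since $\eta\neq 0$ and $\{T^{\ast}:T\in\mathcal{A}\}=\mathcal{A}$, this says $\big(A(I-P)\big)BP=0$ for every $B\in\mathcal{A}$, and part (1), applied with $A(I-P)$ in place of $A$, yields $A(I-P)=0$.

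I do not expect a real obstacle here; this is a routine preparatory lemma. The only genuine idea is the phase substitution $T\mapsto iT$, which works because $\Diamond_{\eta}$ superposes a bilinear and a conjugate-bilinear term, so that rotating by $i$ separates them — a trick that will recur in the additivity arguments of Section \ref{xxsec3}. The rest is bookkeeping with the corners $P$ and $I-P$ together with the characterization of the central carrier.
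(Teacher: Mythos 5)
Your argument is correct. Note, however, that the paper does not prove this lemma at all: it is quoted verbatim from \cite[Lemma 1.2]{Dai}, so there is no internal proof to compare against. Your write-up is a sound self-contained substitute: the existence of $P$ is Lemma \ref{1}, part (1) is exactly the central-carrier density argument, and in part (2) the substitution $T\mapsto \mathrm{i}T$ correctly separates the $T$-linear term $PT(I-P)A$ from the $T$-conjugate-linear term $\eta\,A(I-P)T^{\ast}P$, after which $A(I-P)T^{\ast}P=0$ for all $T$ reduces to part (1) applied to $A(I-P)\in\mathcal{A}$. (The usual route in the literature isolates the two terms by multiplying the identity on the left or right by $P$ and $I-P$; your phase-rotation trick achieves the same separation and is, if anything, slightly cleaner.)
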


\begin{lemma}\label{3}
Suppose that $A_1, A_2, \cdots, A_n$ and $T$ are in $\mathcal{A}$ with $\phi(T)=\sum_{i=1}^n \phi(A_{i})$. Then for $S_1, S_2, \cdots, S_n \in \mathcal{A}$, we have 
$$
\begin{aligned}
&\overset{\ \ k}{\phi(S_1\Diamond_{\eta}\cdots \Diamond_{\eta}S_{k-1} \Diamond_{\eta} T \Diamond_{\eta}S_{k+1}\Diamond_{\eta}\cdots\Diamond_{\eta} S_n)}\\
=&\overset{k \ \ }{\phi(S_1)\Diamond_{\eta}\cdots \Diamond_{\eta}\phi(S_{k-1}) \Diamond_{\eta} \phi(T) \Diamond_{\eta}\phi(S_{k+1})\Diamond_{\eta}\cdots\Diamond_{\eta} \phi(S_n))}\\
=&\sum_{i=1}^n {\overset{\ \ k}{\phi(S_1\Diamond_{\eta}\cdots \Diamond_{\eta}S_{k-1} \Diamond_{\eta} A_i \Diamond_{\eta}S_{k+1}\Diamond_{\eta}\cdots\Diamond_{\eta} S_n)}},
\end{aligned}
$$
where $k=1,2,\cdots,n$.
 \end{lemma}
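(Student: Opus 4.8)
The plan is to derive the displayed chain of equalities from just two ingredients: the standing hypothesis that $\phi$ preserves Jordan $\eta$-$\ast$-$n$-products, and the elementary fact that each polynomial $p_m$ is additive in every one of its arguments.

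First I would record that $\Diamond_{\eta}$ is biadditive: directly from $X\Diamond_{\eta}Y=XY+\eta YX^{\ast}$ one has
\[
(A+B)\Diamond_{\eta}C=A\Diamond_{\eta}C+B\Diamond_{\eta}C
\quad\text{and}\quad
A\Diamond_{\eta}(B+C)=A\Diamond_{\eta}B+A\Diamond_{\eta}C .
\]
Since $p_m(x_1,\dots,x_m)=p_{m-1}(x_1,\dots,x_{m-1})\Diamond_{\eta}x_m$, an easy induction on $m$ then shows that $p_m$ is additive in each of its slots: additivity in the last slot is the second identity above, and additivity in an interior slot $x_k$ with $k<m$ is inherited from the additivity of $p_{m-1}$ in $x_k$ via the first identity above. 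In particular $p_n$ is additive in its $k$-th argument for every $k\in\{1,\dots,n\}$.

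The first equality in the statement is then nothing more than the hypothesis applied to the $n$-tuple $(S_1,\dots,S_{k-1},T,S_{k+1},\dots,S_n)$, i.e.\ $\phi(p_n(S_1,\dots,T,\dots,S_n))=p_n(\phi(S_1),\dots,\phi(T),\dots,\phi(S_n))$ in the notation fixed in Section~\ref{xxsec2}. For the second equality I would substitute $\phi(T)=\sum_{i=1}^{n}\phi(A_i)$ into the $k$-th slot of $p_n(\phi(S_1),\dots,\phi(T),\dots,\phi(S_n))$ and expand by the additivity of $p_n$ in that slot, obtaining $\sum_{i=1}^{n}p_n(\phi(S_1),\dots,\phi(A_i),\dots,\phi(S_n))$; applying the $\eta$-$\ast$-$n$-product-preserving property of $\phi$ once more, now to each tuple $(S_1,\dots,S_{k-1},A_i,S_{k+1},\dots,S_n)$ and used in the reverse direction, rewrites the $i$-th summand as $\phi(p_n(S_1,\dots,A_i,\dots,S_n))$, which is precisely the right-hand side.

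I do not expect any genuine obstacle: the lemma is a formal consequence of the relation $\phi(T)=\sum_i\phi(A_i)$, and the only place asking for a little care is the bookkeeping in the inductive check that $p_m$ is additive in an interior coordinate, which is entirely routine once the biadditivity of $\Diamond_{\eta}$ is in hand. The point of the statement is its role as a tool: it transports an additive identity recorded at the level of $\phi$-images through a single coordinate of an $n$-product, and will be invoked repeatedly in the additivity argument of Section~\ref{xxsec3}.
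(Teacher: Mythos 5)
Your argument is correct: the biadditivity of $\Diamond_{\eta}$ gives additivity of $p_n$ in each slot by the induction you describe, and combining this with the preservation hypothesis (applied once forward to the tuple containing $T$ and once in reverse to each tuple containing $A_i$) yields exactly the stated chain of equalities. The paper states this lemma without any proof, so your write-up simply supplies the routine verification it implicitly relies on; there is no discrepancy in approach.
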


\begin{lemma}\label{4}
Let $\mathcal{A}$ be a von Neumann algebra without central abelian projections. 
For any $A\in \mathcal{A}$ and for any positive integer $n\geq 2$, we have
$$
p_n\left (I,\frac{I}{1+\eta}, \cdots, \frac{I}{1+\eta},\frac{A}{1+\eta} \right)= A. 
$$
and
$$
\phi(A)=p_n\left (\phi(I),\phi\left(\frac{I}{1+\eta}\right), \cdots,\phi\left( \frac{I}{1+\eta} \right),\phi\left(\frac{A}{1+\eta} \right) \right). 
$$
 \end{lemma}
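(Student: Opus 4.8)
The plan is to establish the first (purely algebraic) identity by a short induction on $n$, and then to deduce the second identity by substituting the first into the defining property of $\phi$. Note in advance that neither assertion actually uses the hypothesis that $\mathcal{A}$ has no central abelian projections; all that is needed is $\eta\neq -1$, which makes $1+\eta$ an invertible scalar, so that $\tfrac{I}{1+\eta}$ and $\tfrac{A}{1+\eta}$ are legitimate elements of $\mathcal{A}$.

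For the first identity I would introduce, for $k\geq 2$, the element
$$
q_k := p_k\Bigl(I,\tfrac{I}{1+\eta},\ldots,\tfrac{I}{1+\eta}\Bigr)
$$
built from one copy of $I$ followed by $k-1$ copies of $\tfrac{I}{1+\eta}$, together with the convention $q_1:=I$ (which will cover the degenerate case $n=2$), and show by induction that $q_k=I$ for all $k$. The base case is the one-line computation $q_2 = I\cdot\tfrac{I}{1+\eta} + \eta\,\tfrac{I}{1+\eta}\,I^{\ast} = \tfrac{1+\eta}{1+\eta}I = I$, using $I^{\ast}=I$, and the inductive step is the identical computation $q_{k+1} = q_k\Diamond_{\eta}\tfrac{I}{1+\eta} = I\Diamond_{\eta}\tfrac{I}{1+\eta} = I$. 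Feeding $k=n-1$ into the recursion $p_n = p_{n-1}\Diamond_{\eta}(\,\cdot\,)$ then gives
$$
p_n\Bigl(I,\tfrac{I}{1+\eta},\ldots,\tfrac{I}{1+\eta},\tfrac{A}{1+\eta}\Bigr) = q_{n-1}\Diamond_{\eta}\frac{A}{1+\eta} = I\cdot\frac{A}{1+\eta} + \eta\,\frac{A}{1+\eta}\,I^{\ast} = A,
$$
which is the first claim.

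For the second identity I would simply invoke the standing hypothesis that $\phi$ preserves the Jordan $\eta$-$\ast$-$n$-product, i.e. $\phi\bigl(p_n(x_1,\ldots,x_n)\bigr)=p_n\bigl(\phi(x_1),\ldots,\phi(x_n)\bigr)$, and specialise to $x_1=I$, $x_2=\cdots=x_{n-1}=\tfrac{I}{1+\eta}$, $x_n=\tfrac{A}{1+\eta}$. By the first identity the left-hand side collapses to $\phi(A)$, while the right-hand side is exactly $p_n\bigl(\phi(I),\phi(\tfrac{I}{1+\eta}),\ldots,\phi(\tfrac{I}{1+\eta}),\phi(\tfrac{A}{1+\eta})\bigr)$, which is the displayed formula.

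I do not expect any genuine obstacle here: this lemma is a bookkeeping device whose sole purpose is to let us later recover $\phi(A)$ for arbitrary $A$ from the values of $\phi$ on the ``building blocks'' $I$, $\tfrac{I}{1+\eta}$ and $\tfrac{A}{1+\eta}$. The only points requiring a little care are the indexing — making sure there are exactly $n-2$ copies of $\tfrac{I}{1+\eta}$ and that the auxiliary induction on $q_k$ is run with the correct number of arguments — and the mild degenerate behaviour at $n=2$, both of which are routine.
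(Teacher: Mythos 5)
Your proposal is correct and takes essentially the same route as the paper: the first identity is obtained by recursively collapsing the product via $I\Diamond_{\eta}\tfrac{I}{1+\eta}=I$ (which the paper writes as a one-line ``recursive calculation'' and you spell out as an explicit induction on $q_k$), and the second identity is just the preservation property of $\phi$ applied to this choice of arguments. Your side remark that the hypothesis on central abelian projections is not actually used here is also accurate.
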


\begin{proof}
A recursive calculation gives that
 $$
 \begin{aligned}
&p_n\left(I,\frac{I}{1+\eta}, \cdots, \frac{I}{1+\eta},\frac{A}{1+\eta} \right )\\
=&p_{n-1}\left(I,\frac{I}{1+\eta}, \cdots, \frac{I}{1+\eta},\frac{A}{1+\eta} \right )\\
=&p_{n-2}\left(I,\frac{I}{1+\eta}, \cdots, \frac{I}{1+\eta},\frac{A}{1+\eta} \right )\\
=&\cdots\\
=&A.
\end{aligned}
\eqno{(1)}
$$
By the definition of Jordan $\eta$-$\ast$-$n$-products, we naturally get
$$
\phi(A)=p_n\left (\phi(I),\phi\left(\frac{I}{1+\eta}\right), \cdots,\phi\left( \frac{I}{1+\eta} \right),\phi\left(\frac{A}{1+\eta} \right) \right). \eqno{(2)} 
$$
\end{proof}

\begin{lemma}
Set $\eta=1$. Let $A\in\mathcal{A}$ such that $A^\ast=-A$ and $Z\in \mathcal{Z}(\mathcal{A})$. Then we have
$$
p_n\left(  x_1,x_2 \cdots,x_{n-2}, A,Z\right)=0
$$
for every $x_1,x_2 \cdots,x_{n-2}\in\mathcal{A}$.
\end{lemma}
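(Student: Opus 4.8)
The plan is to isolate a single structural fact about the operation $X\Diamond_{\eta}Y=XY+YX^{\ast}$ in the case $\eta=1$: it preserves skew-adjointness in its second slot. Concretely, if $Y^{\ast}=-Y$, then for every $X\in\mathcal{A}$ one computes $(X\Diamond_{\eta}Y)^{\ast}=Y^{\ast}X^{\ast}+XY^{\ast}=-YX^{\ast}-XY=-(X\Diamond_{\eta}Y)$, so $X\Diamond_{\eta}Y$ is again skew-adjoint. This is the only computation in the argument, and I would record it first.

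Next I would apply this to the partial product obtained by deleting the last entry $Z$. Write $U:=p_{n-1}(x_1,\dots,x_{n-2},A)$ for $n\geq 3$ and $U:=A$ for $n=2$. By the recursive definition of the $p_k$'s we have $U=p_{n-2}(x_1,\dots,x_{n-2})\Diamond_{\eta}A$ when $n\geq 4$, $U=x_1\Diamond_{\eta}A$ when $n=3$, and $U=A$ when $n=2$; in every case $U=V\Diamond_{\eta}A$ for a suitable $V\in\mathcal{A}$ (with $V$ vacuous when $n=2$). Since $A^{\ast}=-A$, the fact from the previous paragraph gives $U^{\ast}=-U$ at once --- no induction over the whole word is needed, because only the final factor $A$ controls the adjoint.

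Finally I would feed in the central element. Using $p_n(x_1,\dots,x_{n-2},A,Z)=U\Diamond_{\eta}Z=UZ+ZU^{\ast}$ together with $Z\in\mathcal{Z}(\mathcal{A})$ and $U^{\ast}=-U$, one gets $UZ+ZU^{\ast}=UZ-ZU=UZ-UZ=0$, which is exactly the claim.

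As for where the difficulty lies: there is essentially none. The statement is an immediate consequence of the two elementary facts that $\Diamond_{\eta}$ (at $\eta=1$) sends skew-adjoint second arguments to skew-adjoint outputs and that central elements commute with everything; the ambient hypothesis on $\mathcal{A}$ plays no role. The only point requiring a moment's care is the bookkeeping in the small cases $n=2,3$, where $p_{n-2}$ is not literally among the defined polynomials; these are dispatched by unwinding the definition of $p_n$ directly.
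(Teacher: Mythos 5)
Your proof is correct and follows essentially the same route as the paper's: both arguments reduce to the observation that $U=p_{n-1}(x_1,\dots,x_{n-2},A)$ is skew-adjoint when $A^{\ast}=-A$ (the paper verifies this implicitly by regrouping the expansion of $(MA+AM^{\ast})\Diamond_{\eta}Z$ into $M(A+A^{\ast})Z+(A+A^{\ast})M^{\ast}Z$), after which centrality of $Z$ gives $U\Diamond_{\eta}Z=UZ-ZU=0$. Your explicit treatment of the small cases $n=2,3$ is a minor improvement over the paper, which tacitly writes $p_{n-2}(x_1,\dots,x_{n-2})$ as if $n\geq 4$.
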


\begin{proof}
For each $x_1,x_2 \cdots,x_{n-2}, A\in\mathcal{A}$, $A^\ast=-A$ and $Z\in \mathcal{Z}(\mathcal{A})$, we have
$$
p_n\left(  x_1,x_2 \cdots,x_{n-2}, A,Z\right)=p_{n-2}\left(  x_1,x_2 \cdots,x_{n-2}\right)\Diamond_{\eta}A\Diamond_{\eta}Z.
$$
Now we note $p_{n-2}\left(  x_1,x_2 \cdots,x_{n-2}\right)=M$, then we get
$$
\begin{aligned}
p_n\left(  x_1,x_2 \cdots,x_{n-2}, A,Z\right)&=M\Diamond A\Diamond Z\\
&=(MA+AM^{\ast})\Diamond Z\\
&=(MA+AM^{\ast})Z+Z(A^{\ast}M^{\ast}+MA^{\ast})\\
&=M(A+A^{\ast})Z+(A+A^{\ast})M^{\ast}Z\\
&=0.
\end{aligned}
\eqno{(3)}
$$
\end{proof}

\begin{lemma}\label{5}
Set $\eta=1$. If we note $M_n=p_n\left( \phi(I), \cdots,\phi(I)\right)$, then we have $M_{n}+M_{n}^{\ast}=2^{n}I$ for every $n \geq 1$.
\end{lemma}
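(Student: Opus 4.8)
The plan is to peel off the case $n=1$ --- which is exactly the assertion $\phi(I)+\phi(I)^{\ast}=2I$ --- and then to close by a one‑line induction. Put $a:=\phi(I)$ and $h:=a+a^{\ast}$. From $M_{n+1}=M_{n}\Diamond_{1}\phi(I)=M_{n}a+aM_{n}^{\ast}$ one gets
$$
M_{n+1}+M_{n+1}^{\ast}=M_{n}a+aM_{n}^{\ast}+a^{\ast}M_{n}^{\ast}+M_{n}a^{\ast}=M_{n}h+hM_{n}^{\ast},
$$
so if $h=2I$ then $M_{n+1}+M_{n+1}^{\ast}=2(M_{n}+M_{n}^{\ast})$, and since $M_{1}+M_{1}^{\ast}=h$ the formula $M_{n}+M_{n}^{\ast}=2^{n}I$ follows for all $n\ge 1$. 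Thus everything is reduced to proving $\phi(I)+\phi(I)^{\ast}=2I$.

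Toward this I would first record $\phi(0)=0$ (pick $E$ with $\phi(E)=0$; since $p_{n}(E,0,S_{3},\dots,S_{n})=0$ and a product with a $0$ in the first slot vanishes, applying $\phi$ to this identity gives $\phi(0)=0$). Next I would extract functional equations: from $p_{n}(I,\dots,I,S)=2^{\,n-1}S$ (last slot free) and the preservation property, $\phi(2^{\,n-1}S)=M_{n-1}\Diamond_{1}\phi(S)$ for all $S$, while Lemma~\ref{4} applied to $2S$ gives $\phi(2S)=g\Diamond_{1}\phi(S)$ with $g:=p_{n-1}\big(\phi(I),\phi(\tfrac{I}{2}),\dots,\phi(\tfrac{I}{2})\big)$. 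Since $\phi$ is bijective, each of $Y\mapsto M_{n-1}\Diamond_{1}Y$ and $Y\mapsto g\Diamond_{1}Y$ is a bijection of the target algebra, hence bounded below; in particular $\phi(I)$ is invertible (a spectral projection of $a$ near $0$ would otherwise violate the lower bound). Iterating these equations and using the elementary identity $M_{k}=a\,h^{k-1}$ (a one‑step induction from $M_{k}=M_{k-1}\Diamond_{1}a$), I would compute $\phi(2^{\,n}I)$ in two ways; matching the expressions and simplifying yields, after a short manipulation, first $a(aa^{\ast}-a^{\ast}a)=0$ and then $aa^{\ast}(aa^{\ast}-a^{\ast}a)=0$, and on passing to adjoints and intersecting the resulting kernels, $aa^{\ast}=a^{\ast}a$: so $\phi(I)$ is normal. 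Running the comparison once more (using now that $Y\mapsto g\Diamond_{1}Y$ is onto) I expect the stronger statement that $aa^{\ast}$ is central, so that $a=Wu$ with $W$ central positive invertible and $u$ unitary, and $h=W(u+u^{\ast})$.

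The hard part is the final step: upgrading ``$\phi(I)$ normal with $\phi(I)\phi(I)^{\ast}$ central'' to the rigid identity $\phi(I)+\phi(I)^{\ast}=2I$. Scalar information alone cannot do this --- for an abelian algebra even $\phi(I)=2I$ is consistent with everything above --- so the hypothesis that one of the two algebras has no central abelian projections must be invoked, through Lemma~\ref{2}. Concretely I would take a $\phi$‑preimage $R$ of the projection $P$ of Lemma~\ref{2}, feed $R$ into $p_{n}$ together with copies of $I$ (and, where needed, of $\tfrac{I}{2}$), and use parts~(1) and~(2) of Lemma~\ref{2} to annihilate the self‑adjoint defect $h-2I=\phi(I)+\phi(I)^{\ast}-2I$; Lemma~\ref{4}, which reconstructs an arbitrary element from $\phi(I)$, $\phi(\tfrac{I}{2})$ and such preimages, is what makes these substitutions effective (the vanishing of skew--central products from the preceding lemma is likely to enter here as well). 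Once $\phi(I)+\phi(I)^{\ast}=2I$ is secured, the induction of the first paragraph closes and $M_{n}+M_{n}^{\ast}=2^{n}I$ for every $n\ge 1$.
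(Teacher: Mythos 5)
Your opening reduction is fine: from $M_{k+1}=M_k a+aM_k^{\ast}$ with $a=\phi(I)$ one does get $M_{k+1}+M_{k+1}^{\ast}=M_k h+hM_k^{\ast}$ where $h=a+a^{\ast}$, so the whole lemma hinges on the single identity $h=2I$. But that is exactly where your proposal stops being a proof. The second and third paragraphs are a programme, not an argument: ``matching the expressions and simplifying yields \dots\ $a(aa^{\ast}-a^{\ast}a)=0$'', ``I expect the stronger statement that $aa^{\ast}$ is central'', ``use parts (1) and (2) of Lemma~\ref{2} to annihilate the self-adjoint defect'' are all left unverified, several are doubtful as stated, and nothing in the text actually produces $\phi(I)+\phi(I)^{\ast}=2I$. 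So there is a genuine gap, and it sits precisely on the one statement everything was reduced to.

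The irony is that you already wrote down the identity that closes the argument. From $p_{n+1}(I,\dots,I,S)=2^{n}S$ and the preservation property you recorded $\phi(2^{n}S)=M_{n}\Diamond_{1}\phi(S)$ for all $S$. The paper simply evaluates this at $S=B$, where $B$ is a $\phi$-preimage of $I$ (surjectivity): the right-hand side becomes $M_{n}\Diamond_{1}I=M_{n}+M_{n}^{\ast}$, while the left-hand side is $\phi(2^{n}B)=2^{n}\phi(B)=2^{n}I$, using the rational homogeneity of $\phi$ that comes with the additivity of Theorem~\ref{6} (the lemma is only ever invoked after additivity is in force). That gives $M_{n}+M_{n}^{\ast}=2^{n}I$ for every $n$ in one stroke --- no induction, no invertibility, normality or polar decomposition of $\phi(I)$, and no appeal to the absence of central abelian projections. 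Your instinct that Lemma~\ref{2} and the structure of $\mathcal{B}$ must enter here is misplaced: surjectivity, the preservation identity, and homogeneity suffice.
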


\begin{proof}
Since $\phi$ is injective, there exsit $B\in\mathcal{A}$ satisfying $\phi(B)=I$. Then we have
$$
\begin{aligned}
2^{n}I=2^{n}\phi(B)=&\phi \left( p_{n+1}\left(  I, \cdots,I, B\right)\right)\\
=& p_{n+1}\left( \phi(I),\cdots, \phi(I),\phi(B) \right)\\
=& p_{n+1}\left( \phi(I),\cdots, \phi(I),I \right)\\
=& p_{n}\left( \phi(I),\cdots, \phi(I)\right)\Diamond I\\
=&M_{n}+M_{n}^{\ast}.
\end{aligned}
\eqno{(4)}
$$
If $n=1$, we have 
$$
\phi(I)+\phi(I)^{\ast}=2I. \eqno{(5)}
$$
\end{proof}

We now choose a projection $P_1\in \mathcal{A}$ and let $P_2=I-P_1$. Let us write $\mathcal{A}_{jk}=P_j\mathcal{A}P_k$ for all $j,k=1,2$. Then we have the Peirce decomposition of $\mathcal{A}$ as
$\mathcal{A}=\mathcal{A}_{11} + \mathcal{A}_{12} + \mathcal{A}_{21} +\mathcal{A}_{22}$. Thus an arbitrary operator $A\in \mathcal{A}$ can be written as $A=A_{11}+ A_{12} + A_{21} + A_{22}$, where $A_{jk}\in \mathcal{A}_{jk}$ and $A_{jk}^\ast\in \mathcal{A}_{kj}$.

\section{Additivity}
\label{xxsec3}

\begin{theorem} \label{6}
Let $\mathcal{A}$ be a von Neumann algebra without central abelian projections and  $\mathcal{B}$ be a $\ast$-algebra. Let $\eta$ be a non-zero scalar with $\eta \neq -1$. Suppose that $\phi$ is a bijiective map from $\mathcal{A}$ to $\mathcal{B}$ with 
$$
\phi(p_n(x_1,x_2,\cdots,x_n))=p_n(\phi(x_1),\phi(x_2),\cdots,\phi(x_n))
$$
for all $x_1,x_2,\cdots,x_n \in \mathcal{A}$, then $\phi$ is additive.
\end{theorem}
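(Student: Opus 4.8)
The plan is to follow the classical Peirce-decomposition strategy used for additivity results of this type (Bai--Du, Dai--Lu, Huo et al.), adapted to the $n$-fold product. Fix the projection $P_1$ from Lemma~\ref{1} (so that $\underline{P_1}=0$ and $\overline{P_1}=I$), put $P_2=I-P_1$, and use the Peirce decomposition $\mathcal{A}=\mathcal{A}_{11}+\mathcal{A}_{12}+\mathcal{A}_{21}+\mathcal{A}_{22}$. The workhorse throughout will be Lemma~\ref{3}: once we know $\phi(T)=\sum_i\phi(A_i)$ for one element $T$, we may substitute $T$ into any slot of an $n$-fold product and distribute $\phi$ over the sum. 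Combined with Lemma~\ref{4}, which recovers $A$ (and $\phi(A)$) from the product $p_n(I,\tfrac{I}{1+\eta},\dots,\tfrac{I}{1+\eta},\tfrac{A}{1+\eta})$, this lets us promote "additivity inside products" to honest additivity $\phi(A+B)=\phi(A)+\phi(B)$.

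First I would establish the base case: $\phi(0)=0$. This follows because $p_n(0,x_2,\dots,x_n)=0$ identically, so $\phi(0)=p_n(\phi(0),\phi(x_2),\dots,\phi(x_n))$; choosing the $x_i$ so that $\phi(x_i)$ ranges suitably (using surjectivity) forces $\phi(0)=0$, or alternatively one runs the standard argument via an element mapping to $0$. Next, the heart of the proof is additivity on each Peirce block, carried out in the usual order: (i) for $A_{12}\in\mathcal{A}_{12}$ and $B_{12}\in\mathcal{A}_{12}$, show $\phi(A_{12}+B_{12})=\phi(A_{12})+\phi(B_{12})$ by evaluating a cleverly chosen $n$-fold product in which the only contributions come from the $12$-blocks (here $P_1 T P_2\Diamond_\eta(\cdot)$ type terms and Lemma~\ref{2}(2) do the separation work); (ii) the same for $\mathcal{A}_{21}$ by symmetry; (iii) additivity across different blocks, i.e.\ $\phi(A_{jj}+B_{jk})=\phi(A_{jj})+\phi(B_{jk})$ for $j\neq k$, again by choosing products that isolate the relevant corners; (iv) additivity on the diagonal blocks $\mathcal{A}_{11}$ and $\mathcal{A}_{22}$, which is typically the most delicate and uses the off-diagonal cases already proved together with an identity expressing a diagonal element through products of off-diagonal ones.

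Once each block is handled, I would assemble the general case: for $A=\sum_{j,k}A_{jk}$ write, using the block-wise additivity already established together with Lemma~\ref{3} applied repeatedly, $\phi(A)=\phi(A_{11})+\phi(A_{12})+\phi(A_{21})+\phi(A_{22})$, and then deduce $\phi(A+B)=\phi(A)+\phi(B)$ for arbitrary $A,B$ by decomposing both into corners, using additivity within each $\mathcal{A}_{jk}$ and the cross-block additivity from step (iii). Lemma~\ref{4} is what guarantees that "$\phi$ distributes over a sum inside every product" actually upgrades to "$\phi$ distributes over that sum", since $A$ itself is realized as such a product.

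\textbf{Main obstacle.} The principal difficulty is step (iv), additivity on the diagonal blocks $\mathcal{A}_{11}$ and $\mathcal{A}_{22}$: unlike the off-diagonal corners, a diagonal element is not annihilated by the clean "$P_1(\cdot)P_2$" test elements, so one cannot directly invoke Lemma~\ref{2}(2). The standard remedy is to express a diagonal element as (a component of) a Jordan $\eta$-$\ast$-$n$-product of off-diagonal elements and then transport the already-established off-diagonal additivity through that identity; making this bookkeeping work uniformly in $n$, and controlling the extra terms produced by the $n-2$ intermediate $\Diamond_\eta$ operations (which, since $\Diamond_\eta$ is non-associative and mixes $A$ with $A^\ast$, generate many cross terms), is the technically heaviest part. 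A secondary subtlety is that $\eta$ is a general complex scalar with $\eta\neq-1$, so one must avoid any argument that implicitly assumes $\eta$ real or $|\eta|=1$; the scalar $\tfrac{1}{1+\eta}$ appearing in Lemma~\ref{4} is precisely what keeps the normalization valid in this generality.
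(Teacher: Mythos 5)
Your overall strategy is the same as the paper's: Peirce decomposition with respect to the projection of Lemma~\ref{1}, the ``find $T$ with $\phi(T)=\phi(A)+\phi(B)$, substitute $T$ into test products via Lemma~\ref{3}, cancel by injectivity'' mechanism, and Lemma~\ref{4} to pass from identities inside products to identities between elements. However, what you have written is a roadmap rather than a proof, and the roadmap has its dependencies backwards in the one place where the order genuinely matters. You propose to prove additivity \emph{within} the off-diagonal blocks ($\phi(A_{12}+B_{12})=\phi(A_{12})+\phi(B_{12})$) first, ``by evaluating a product in which the only contributions come from the $12$-blocks,'' and only afterwards to prove the mixed-corner cases. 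No such single product exists: the separation tricks ($\lambda P_2$, $\lambda\eta P_1-\overline{\lambda}P_2$, Lemma~\ref{2}(2)) distinguish \emph{different} Peirce corners, but two elements of the \emph{same} corner $\mathcal{A}_{12}$ cannot be separated this way. The paper's Claim~6 instead rests on the identity
$$
\left(P_i+A_{ij}\right)\Diamond_{\eta}\left(P_j+B_{ij}\right)=A_{ij}+B_{ij}+\eta A_{ij}^{\ast}+\eta B_{ij}A_{ij}^{\ast},
$$
and to exploit it one must already know the mixed-corner additivity statements $\phi(P_i+A_{ij})=\phi(P_i)+\phi(A_{ij})$ and the three-term versions (the paper's Claims~4 and~5), so that the right-hand side can be split and matched term by term. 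Your steps (i)--(ii) therefore cannot precede your step (iii); as ordered, the plan stalls at its first substantive step.

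Two further points where the plan is too thin to count as a proof. For the diagonal blocks, your idea of ``expressing a diagonal element through products of off-diagonal ones'' is close to, but not quite, what is needed: the paper first pins down $T=T_{ii}$ by testing against $\lambda P_k$, and then tests against $p_n(I,\dots,\lambda P_i/(1+\eta),T,C_{ik})$ to get $(T_{ii}-A_{ii}-B_{ii})C_{ik}=0$ for all $C_{ik}$, concluding by Lemma~\ref{2}(1); the worry you raise about ``many cross terms from the $n-2$ intermediate $\Diamond_\eta$ operations'' is a non-issue because the leading string $I,\tfrac{I}{1+\eta},\dots,\tfrac{I}{1+\eta}$ collapses by Eq.~(1), which is exactly what Lemma~\ref{4} is for. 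Finally, your assembly step quietly assumes a four-term corner additivity $\phi(A_{11}+A_{12}+A_{21}+A_{22})=\sum\phi(A_{jk})$ that is never proved anywhere in your plan; the paper avoids it entirely by instead proving a distributivity statement for products against $T_{12}=PS(I-P)$ (its Claims~8 and~9) and then separating $T$ from $A+B$ via Lemma~\ref{2}. Your route can be repaired, but as written the final step does not follow from the claims you list.
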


\begin{proof}
The proof of this theorem will be laid out nicely in several claims.
\vspace{2mm}

\noindent {\bf Claim 1.} $\phi(0)=0$.

Since $\phi$ is a surjective, we can find $A\in \mathcal{A}$ with $\phi(A)=0$, which implies that
$$
\phi(0)=\phi(p_n(0,0,\cdots,0,A))=p_n(\phi(0),\cdots,\phi(0),\cdots,\phi(A))=0.
$$

In order to continue our discussions, we need the Peirce decomposition of $\mathcal{A}$ as
$\mathcal{A}=\mathcal{A}_{11}+\mathcal{A}_{12}+\mathcal{A}_{21}+\mathcal{A}_{22}$. Then for any operator $A\in \mathcal{A}$, we may write
$A=A_{11}+A_{12}+A_{21}+A_{22}$ for any $A_{jk}\in \mathcal{A}_{jk}(j, k=1,2)$.

\noindent{\bf Claim 2.} For any $A_{11}\in \mathcal{A}_{11}, D_{22}\in \mathcal{A}_{22}$, we have $\phi(A_{11}+D_{22})=\phi(A_{11})+\phi(D_{22})$.

Since $\phi$ is a surjective, we can find an element $T=\sum_{i,j=1}^2 T_{ij}$ with $\phi(T)=\phi(A_{11})+\phi(D_{22})$. For any $\lambda \in \Bbb{C}$, 
$p_n\left (I,\frac{I}{1+\eta}, \cdots, \frac{I}{1+\eta},\frac{\lambda P_1}{1+\eta}, D_{22} \right)=0$. Applying Lemma \ref{3} and Claim 1 in Section \ref{xxsec3}, we have
$$
\phi\left( p_n\left (I,\frac{I}{1+\eta}, \cdots, \frac{I}{1+\eta},\frac{\lambda P_1}{1+\eta}, T \right)\right)=\phi\left( p_n\left (I,\frac{I}{1+\eta}, \cdots, \frac{I}{1+\eta},\frac{\lambda P_1}{1+\eta}, A_{11} \right) \right).
$$
By the injectivity of $\phi$, we have 
$$
p_n\left (I,\frac{I}{1+\eta}, \cdots, \frac{I}{1+\eta},\frac{\lambda P_1}{1+\eta}, T \right)=p_n\left (I,\frac{I}{1+\eta}, \cdots, \frac{I}{1+\eta},\frac{\lambda P_1}{1+\eta}, A_{11} \right).
$$
By Eq.(1), we get
$$
\lambda P_1\Diamond_{\eta} T=\lambda P_1\Diamond_{\eta} A_{11},
$$
which implies that
$$
(\lambda+\eta \overline{\lambda})T_{11}+\lambda T_{12}+\eta \overline{\lambda}T_{21}=(\lambda+\eta \overline{\lambda})A_{11}.
$$
Suppose that $\lambda \neq 0$ and $\lambda+\eta \overline{\lambda} \neq 0$, we get $T_{11}=A_{11},T_{12}=T_{21}=0$. 

Similarly, we have $T_{22}=D_{22}$.

\noindent{\bf Claim 3.} For any $B_{12}\in \mathcal{A}_{12}, C_{21}\in \mathcal{A}_{21}$, we have $\phi(B_{12}+C_{21})=\phi(B_{12})+\phi(C_{21})$.

Since $\phi$ is a surjective, we can find an element $T=\sum_{i,j=1}^2 T_{ij}$ with $\phi(T)=\phi(B_{12})+\phi(C_{21})$. For any $\lambda \in \Bbb{C}$, by Eq.(1), since
$$
\begin{aligned}
&p_n\left (I,\frac{I}{1+\eta}, \cdots, \frac{I}{1+\eta},\frac{\lambda \eta P_1-\overline{\lambda}P_2}{1+\eta}, B_{12} \right)\\
=&(\lambda \eta P_1-\overline{\lambda}P_2)\Diamond_{\eta} B_{12}\\
=&0.
\end{aligned}
$$
Applying Lemma \ref{3} and Claim 1 in Section \ref{xxsec3} again, we have
$$
\begin{aligned}
&\phi\left( p_n\left (I,\frac{I}{1+\eta}, \cdots, \frac{I}{1+\eta},\frac{\lambda \eta P_1-\overline{\lambda}P_2}{1+\eta}, T \right)\right)\\
=&\phi\left( p_n\left( I,\frac{I}{1+\eta}, \cdots, \frac{I}{1+\eta},\frac{\lambda \eta P_1-\overline{\lambda}P_2}{1+\eta}, C_{21} \right) \right).
\end{aligned}
$$
By the injectivity of $\phi$, we have 
$$
p_n\left (I,\frac{I}{1+\eta}, \cdots, \frac{I}{1+\eta},\frac{\lambda \eta P_1-\overline{\lambda}P_2}{1+\eta}, T \right)=p_n\left (I,\frac{I}{1+\eta}, \cdots, \frac{I}{1+\eta},\frac{\lambda \eta P_1-\overline{\lambda}P_2}{1+\eta}, C_{21} \right),
$$
which is equivalent to
$$
(\lambda \eta P_1-\overline{\lambda}P_2)\Diamond_{\eta}T=(\lambda \eta P_1-\overline{\lambda}P_2)\Diamond_{\eta}C_{21}
$$
by Eq.(1).
Then we obtain that
$$
(\lambda \eta+\overline{\lambda} |\eta|^2) T_{11}-(\lambda \eta+\overline{\lambda}) T_{22}+(\overline{\lambda} |\eta |^2-\overline{\lambda})T_{21}=(\overline{\lambda}|\eta|^2-\overline{\lambda})C_{21},
$$
for all that $\lambda \in \Bbb{C}$. Thus we get $T_{11}=T_{22}=0$. 

 For any $\lambda \in \Bbb{C}$, since
$$
p_n\left (I,\frac{I}{1+\eta}, \cdots, \frac{I}{1+\eta},\frac{\lambda B_{12}}{1+\eta}, P_{1} \right)=0.
$$
Applying Lemma \ref{3} and Claim 1 in Section \ref{xxsec3} again, we have
$$
\begin{aligned}
&\phi\left( p_n\left (I,\frac{I}{1+\eta}, \cdots, \frac{I}{1+\eta},\frac{\lambda T}{1+\eta},P_{1} \right)\right)\\
=&\phi\left( p_n\left( I,\frac{I}{1+\eta}, \cdots, \frac{I}{1+\eta},\frac{\lambda C_{21}}{1+\eta}, P_{1} \right) \right).
\end{aligned}
$$
By the injectivity of $\phi$, we have 
$$
p_n\left (I,\frac{I}{1+\eta}, \cdots, \frac{I}{1+\eta},\frac{\lambda T}{1+\eta},P_{1} \right)=p_n\left (I,\frac{I}{1+\eta}, \cdots, \frac{I}{1+\eta},\frac{\lambda C_{21}}{1+\eta}, P_{1} \right),
$$
which implies that
$$
\lambda T_{21}+\overline{\lambda} \eta T_{21}^{\ast}=\lambda C_{21}+\overline{\lambda} \eta C_{21}^{\ast}.
$$
Suppose that $\lambda \neq 0$ and $\eta \overline{\lambda} \neq 0$, then we get $T_{21}=C_{21}$. 

Similarly, we have $T_{12}=B_{12}$.

\noindent{\bf Claim 4.}  For $i,j,k \in \{ 1,2 \}$, if $i \neq j, A_{kk}\in \mathcal{A}_{kk}$ and $B_{ij}\in \mathcal{A}_{ij}$, we have $\phi(A_{kk}+B_{ij})=\phi(A_{kk})+\phi(B_{ij})$.

We only prove the case $i=k=1,j=2$. The proof of other cases is similar. Since $\phi$ is a surjective, we can find an element $T=\sum_{i,j=1}^2 T_{ij}$ with $\phi(T)=\phi(A_{11})+\phi(B_{12})$. For any $\lambda \in \Bbb{C}$, since
$$
p_n\left (I,\frac{I}{1+\eta}, \cdots, \frac{I}{1+\eta},\frac{\lambda P_2}{1+\eta}, A_{11} \right)=0.
$$
Applying Lemma \ref{3} and Claim 1 in Section \ref{xxsec3} again, we have
$$
\begin{aligned}
&\phi\left( p_n\left (I,\frac{I}{1+\eta}, \cdots, \frac{I}{1+\eta},\frac{\lambda P_2}{1+\eta}, T \right)\right)\\
=&\phi\left( p_n\left( I,\frac{I}{1+\eta}, \cdots, \frac{I}{1+\eta},\frac{\lambda P_2}{1+\eta}, B_{12} \right) \right).
\end{aligned}
$$
By the injectivity of $\phi$, we have 
$$
p_n\left (I,\frac{I}{1+\eta}, \cdots, \frac{I}{1+\eta},\frac{\lambda P_2}{1+\eta}, T \right)=p_n\left (I,\frac{I}{1+\eta}, \cdots, \frac{I}{1+\eta},\frac{\lambda P_2}{1+\eta}, B_{12} \right),
$$
which implies that
$$
(\lambda +\overline{\lambda}\eta) T_{22}+\lambda T_{21}+\overline{\lambda}\eta T_{12}=\overline{\lambda}\eta B_{12},
$$
for all $\lambda \neq 0$. Thus we get $T_{21}=T_{22}=0$ and $T_{12}=B_{12}$. 

 For any $\lambda \in \Bbb{C}$, since
$$
p_n\left (I,\frac{I}{1+\eta}, \cdots, \frac{I}{1+\eta},\frac{\lambda \eta P_1-\overline{\lambda}P_2}{1+\eta}, B_{12} \right)=0.
$$
Applying Lemma \ref{3} and Claim 1 in Section \ref{xxsec3} again, we have
$$
\begin{aligned}
&\phi\left( p_n\left (I,\frac{I}{1+\eta}, \cdots, \frac{I}{1+\eta},\frac{\lambda \eta P_1-\overline{\lambda}P_2}{1+\eta}, T \right)\right)\\
=&\phi\left( p_n\left( I,\frac{I}{1+\eta}, \cdots, \frac{I}{1+\eta},\frac{\lambda \eta P_1-\overline{\lambda}P_2}{1+\eta}, A_{11} \right) \right).
\end{aligned}
$$
By the injectivity of $\phi$, we have 
$$
p_n\left (I,\frac{I}{1+\eta}, \cdots, \frac{I}{1+\eta},\frac{\lambda \eta P_1-\overline{\lambda}P_2}{1+\eta}, T \right)=p_n\left (I,\frac{I}{1+\eta}, \cdots, \frac{I}{1+\eta},\frac{\lambda \eta P_1-\overline{\lambda}P_2}{1+\eta}, A_{11} \right).
$$

A similar discussionas the above, we get $T_{11}=A_{11}$.

\noindent {\bf Claim 5.}  For any $A_{11}\in \mathcal{A}_{11},B_{12}\in \mathcal{A}_{12}, C_{21}\in \mathcal{A}_{21}$ and $D_{22}\in \mathcal{A}_{22}$, we have 
$$
\phi(A_{11}+B_{12}+C_{21})=\phi(A_{11})+\phi(B_{12})+\phi(C_{21})
$$
and
$$
\phi(B_{12}+C_{21}+D_{22})=\phi(B_{12})+\phi(C_{21})+\phi(D_{22}).
$$

We just prove the first identity, the second identity can be proved by the similar method.

Since $\phi$ is a surjective, we can find an element $T=\sum_{i,j=1}^2 T_{ij}$ with $\phi(T)=\phi(A_{11})+\phi(B_{12})+\phi(C_{21})$. For any $\lambda \in \Bbb{C}$, applying Lemma \ref{3}, we have
$$
\begin{aligned}
&\phi\left( p_n\left (I,\frac{I}{1+\eta}, \cdots, \frac{I}{1+\eta},\frac{\lambda P_2}{1+\eta}, T \right)\right)\\
=&\phi\left( p_n\left( I,\frac{I}{1+\eta}, \cdots, \frac{I}{1+\eta},\frac{\lambda P_2}{1+\eta}, A_{11} \right) \right)\\
&+\phi\left( p_n\left( I,\frac{I}{1+\eta}, \cdots, \frac{I}{1+\eta},\frac{\lambda P_2}{1+\eta}, B_{12} \right) \right)\\
&+\phi\left( p_n\left( I,\frac{I}{1+\eta}, \cdots, \frac{I}{1+\eta},\frac{\lambda P_2}{1+\eta}, C_{21} \right) \right).
\end{aligned}
$$
By the injectivity of $\phi$, we have 
$$
\begin{aligned}
& p_n\left (I,\frac{I}{1+\eta}, \cdots, \frac{I}{1+\eta},\frac{\lambda P_2}{1+\eta}, T \right)\\
=& p_n\left( I,\frac{I}{1+\eta}, \cdots, \frac{I}{1+\eta},\frac{\lambda P_2}{1+\eta}, A_{11} \right) \\
&+ p_n\left( I,\frac{I}{1+\eta}, \cdots, \frac{I}{1+\eta},\frac{\lambda P_2}{1+\eta}, B_{12} \right) \\
&+ p_n\left( I,\frac{I}{1+\eta}, \cdots, \frac{I}{1+\eta},\frac{\lambda P_2}{1+\eta}, C_{21} \right) .
\end{aligned}
$$
which implies that
$$
\lambda P_2 \Diamond_{\eta} T=\lambda P_2 \Diamond_{\eta}  A_{11}+\lambda P_2 \Diamond_{\eta} B_{12}+\lambda P_2 \Diamond_{\eta} C_{21}
$$
by Eq.(1), so we have
$$
(\lambda +\eta \overline{\lambda}) T_{22}+\lambda T_{21}+\eta \overline{\lambda} T_{12}=\eta \overline{\lambda} B_{12}+\lambda C_{21},
$$
which implies that $T_{22}=0,T_{12}=B_{12},T_{21}=C_{21}$. Thus we get $T=T_{11}+B_{12}+C_{21}$.

Similarly, we have
$$
\begin{aligned}
&\phi\left( p_n\left (I,\frac{I}{1+\eta}, \cdots, \frac{I}{1+\eta},\frac{\lambda \eta P_1-\overline{\lambda} P_2}{1+\eta}, T \right)\right)\\
=&\phi\left( p_n\left( I,\frac{I}{1+\eta}, \cdots, \frac{I}{1+\eta},\frac{\lambda \eta P_1-\overline{\lambda} P_2}{1+\eta}, A_{11} \right) \right)\\
&+\phi\left( p_n\left( I,\frac{I}{1+\eta}, \cdots, \frac{I}{1+\eta},\frac{\lambda \eta P_1-\overline{\lambda} P_2}{1+\eta}, B_{12} \right) \right)\\
&+\phi\left( p_n\left( I,\frac{I}{1+\eta}, \cdots, \frac{I}{1+\eta},\frac{\lambda \eta P_1-\overline{\lambda} P_2}{1+\eta}, C_{21} \right) \right).
\end{aligned}
$$
By the injectivity of $\phi$, we have 
$$
\begin{aligned}
& p_n\left (I,\frac{I}{1+\eta}, \cdots, \frac{I}{1+\eta},\frac{\lambda \eta P_1-\overline{\lambda} P_2}{1+\eta}, T \right)\\
=& p_n\left( I,\frac{I}{1+\eta}, \cdots, \frac{I}{1+\eta},\frac{\lambda \eta P_1-\overline{\lambda} P_2}{1+\eta}, A_{11} \right) \\
&+ p_n\left( I,\frac{I}{1+\eta}, \cdots, \frac{I}{1+\eta},\frac{\lambda \eta P_1-\overline{\lambda} P_2}{1+\eta}, B_{12} \right) \\
&+ p_n\left( I,\frac{I}{1+\eta}, \cdots, \frac{I}{1+\eta},\frac{\lambda \eta P_1-\overline{\lambda} P_2}{1+\eta}, C_{21} \right) .
\end{aligned}
$$
which implies that
$$
(\lambda \eta P_1-\overline{\lambda} P_2) \Diamond_{\eta} T=(\lambda \eta P_1-\overline{\lambda} P_2) \Diamond_{\eta}  A_{11}+(\lambda \eta P_1-\overline{\lambda} P_2) \Diamond_{\eta} B_{12}+(\lambda \eta P_1-\overline{\lambda} P_2) \Diamond_{\eta} C_{21},
$$
by Eq.(1), so we have
$$
(\lambda \eta + \overline{\lambda}|\eta|^2) T_{11}- \overline{\lambda} (1-|\eta|^2) T_{12}=(\lambda \eta + \overline{\lambda}|\eta|^2) A_{11}- \overline{\lambda} (1-|\eta|^2) B_{12},
$$
then we get $T_{11}=A_{11}$.

\noindent{\bf Claim 6.}  For any $A_{ij}, B_{ij}\in \mathcal{A}_{ij} (1 \leqslant i\neq j \leqslant 2)$, we have $\phi(A_{ij}+B_{ij})=\phi(A_{ij})+\phi(B_{ij})$.

Since 
$$
\begin{aligned}
&p_n\left( I,\frac{I}{1+\eta}, \cdots, \frac{I}{1+\eta},\frac{P_i+A_{ij}}{1+\eta}, P_j+B_{ij} \right)\\
=&\left( P_i+A_{ij}\right) \Diamond_{\eta} (P_j+B_{ij})\\
=&A_{ij}+B_{ij}+\eta(A_{ij}^{\ast}+B_{ij}A_{ij}^{\ast}).
\end{aligned}
$$
By Claim 4 and Claim 5 in Section \ref{xxsec3}, we obtain that 
$$
\begin{aligned}
&\phi(A_{ij}+B_{ij})+\phi(\eta A_{ij}^{\ast})+\phi(\eta B_{ij}A_{ij}^{\ast})\\
=&\phi \left( p_n\left( I,\frac{I}{1+\eta}, \cdots, \frac{I}{1+\eta},\frac{P_i+A_{ij}}{1+\eta}, P_j+B_{ij} \right)\right)\\
=&p_n\left(\phi \left( I\right),\phi \left( \frac{I}{1+\eta}\right), \cdots, \phi \left( \frac{I}{1+\eta}\right),\phi \left( \frac{P_i+A_{ij}}{1+\eta}\right),\phi \left(  P_j+B_{ij} \right)\right)\\
=&p_n\left(\phi \left( I\right),\phi \left( \frac{I}{1+\eta}\right), \cdots, \phi \left( \frac{I}{1+\eta}\right),\phi \left( \frac{P_i}{1+\eta}\right)+\phi \left( \frac{A_{ij}}{1+\eta}\right),\phi \left(  P_j \right)+\phi \left( B_{ij} \right)\right)\\
\end{aligned}
$$
$$
\begin{aligned}
=&p_n\left(\phi \left( I\right),\phi \left( \frac{I}{1+\eta}\right), \cdots, \phi \left( \frac{I}{1+\eta}\right),\phi \left( \frac{P_i}{1+\eta}\right),\phi \left(  P_j \right)\right)\\
&+p_n\left(\phi \left( I\right),\phi \left( \frac{I}{1+\eta}\right), \cdots, \phi \left( \frac{I}{1+\eta}\right),\phi \left( \frac{P_i}{1+\eta}\right),\phi \left( B_{ij} \right)\right)\\
&+p_n\left(\phi \left( I\right),\phi \left( \frac{I}{1+\eta}\right), \cdots, \phi \left( \frac{I}{1+\eta}\right),\phi \left( \frac{A_{ij}}{1+\eta}\right),\phi \left(  P_j \right)\right)\\
&+p_n\left(\phi \left( I\right),\phi \left( \frac{I}{1+\eta}\right), \cdots, \phi \left( \frac{I}{1+\eta}\right),\phi \left( \frac{A_{ij}}{1+\eta}\right),\phi \left( B_{ij} \right)\right)\\
=&\phi \left( p_n\left( I,\frac{I}{1+\eta}, \cdots, \frac{I}{1+\eta},\frac{P_i}{1+\eta}, P_j \right)\right)\\
&+\phi \left( p_n\left( I,\frac{I}{1+\eta}, \cdots, \frac{I}{1+\eta},\frac{P_i}{1+\eta}, B_{ij} \right)\right)\\
&+\phi \left( p_n\left( I,\frac{I}{1+\eta}, \cdots, \frac{I}{1+\eta},\frac{A_{ij}}{1+\eta}, P_j \right)\right)\\
&+\phi \left( p_n\left( I,\frac{I}{1+\eta}, \cdots, \frac{I}{1+\eta},\frac{A_{ij}}{1+\eta}, B_{ij} \right)\right)\\
=&\phi(A_{ij})+\phi(B_{ij})+\phi(\eta A_{ij}^{\ast})+\phi(\eta B_{ij}A_{ij}^{\ast}).
\end{aligned}
$$
Thus we have $\phi(A_{ij}+B_{ij})=\phi(A_{ij})+\phi(B_{ij})$.

\noindent{\bf Claim 7.}  For every $A_{ii},B_{ii}\in \mathcal{A}_{ii}, 1  \leqslant i  \leqslant 2$, we have $\phi(A_{ii}+B_{ii})=\phi(A_{ii})+\phi(B_{ii})$.

Since $\phi$ is a surjective, we can find an element $T=\sum_{i,j=1}^2 T_{ij}$ with $\phi(T)=\phi(A_{ii})+\phi(B_{ii})$. For any $\lambda \in \Bbb{C} $ and $1  \leqslant i \neq k  \leqslant 2$, we have
$$
\begin{aligned}
&\phi\left( p_n\left (I,\frac{I}{1+\eta}, \cdots, \frac{I}{1+\eta},\frac{\lambda P_k}{1+\eta}, T \right)\right)\\
=&\phi\left( p_n\left( I,\frac{I}{1+\eta}, \cdots, \frac{I}{1+\eta},\frac{\lambda P_k}{1+\eta}, A_{ii} \right) \right)\\
&+\phi\left( p_n\left( I,\frac{I}{1+\eta}, \cdots, \frac{I}{1+\eta},\frac{\lambda P_k}{1+\eta}, B_{ii} \right) \right)\\
=&0.
\end{aligned}
$$
By the injectivity of $\phi$, we have 
$$
p_n\left (I,\frac{I}{1+\eta}, \cdots, \frac{I}{1+\eta},\frac{\lambda P_k}{1+\eta}, T \right)=0,
$$
which implies that
$$
(\lambda +\overline{\lambda}\eta) T_{kk}+\lambda T_{ki}+\overline{\lambda}\eta T_{ik}=0,
$$
for all $\lambda \neq 0$. Thus we get $T_{kk}=T_{ki}=T_{ik}=0$. Now we get $T=T_{ii}$. 

 For every $C_{ik} \in \mathcal{A}_{ik},i \neq k$, it follows from Lemma \ref{3} and Claim 6 in Section \ref{xxsec3} that
$$
\begin{aligned}
\phi((\lambda+\eta \overline{\lambda})T_{ii}C_{ik})=&\phi\left( p_n\left (I,\frac{I}{1+\eta}, \cdots, \frac{I}{1+\eta},\frac{\lambda P_i}{1+\eta}, T, C_{ik} \right)\right)\\
=&\phi\left( p_n\left( I,\frac{I}{1+\eta}, \cdots, \frac{I}{1+\eta},\frac{\lambda P_i}{1+\eta}, A_{ii}, C_{ik} \right) \right)\\
&+\phi\left( p_n\left( I,\frac{I}{1+\eta}, \cdots, \frac{I}{1+\eta},\frac{\lambda P_i}{1+\eta}, B_{ii}, C_{ik}\right) \right)\\
=&\phi((\lambda+\eta \overline{\lambda})(A_{ii}C_{ik}+B_{ii}C_{ik})).
\end{aligned}
$$
Hence we have 
$$
(T_{ii-}A_{ii}-B_{ii})C_{ik}=0
$$
for all $C_{ik} \in \mathcal{A}_{ik}$, that is, $(T_{ii}-A_{ii}-B_{ii})CP_{i}=0$ for all $C \in \mathcal{A}$. By Lemma ?, we get that  $T_{ii}=A_{ii}+B_{ii}$. Consequently,
$$
\phi(A_{ii}+B_{ii})=\phi(A_{ii})+\phi(B_{ii}).
$$

\noindent{\bf Claim 8.}  For any $T_{12}, A_{12}, B_{12}\in \mathcal{A}_{12}$ and $A_{21}, B_{21}\in \mathcal{A}_{21}$, we have 
$$
\phi(T_{12}A_{21}+T_{12}B_{21}+\eta A_{12}T_{12}^{\ast}+\eta B_{12}T_{12}^{\ast})=\phi(T_{12}A_{21})+\phi(T_{12}B_{21})+\phi(\eta A_{12}T_{12}^{\ast})+\phi(\eta B_{12}T_{12}^{\ast}).
$$

By Claim 3 and Claim 6 in Section \ref{xxsec3}, we obtain that 
$$
\begin{aligned}
&\phi(T_{12}A_{21}+T_{12}B_{21}+\eta A_{12}T_{12}^{\ast}+\eta B_{12}T_{12}^{\ast})\\
=&\phi \left( p_n\left( I,\frac{I}{1+\eta}, \cdots, \frac{I}{1+\eta},\frac{T_{12}}{1+\eta}, A_{21}+B_{21}+A_{12}+B_{12} \right)\right)\\
=&p_n\left(\phi \left( I\right),\phi \left( \frac{I}{1+\eta}\right), \cdots, \phi \left( \frac{I}{1+\eta}\right),\phi \left( \frac{T_{12}}{1+\eta}\right),\phi \left(  A_{21}+B_{21}+A_{12}+B_{12} \right)\right)\\
=&p_n\left(\phi \left( I\right),\phi \left( \frac{I}{1+\eta}\right), \cdots, \phi \left( \frac{I}{1+\eta}\right),\phi \left( \frac{T_{12}}{1+\eta}\right),\phi \left( A_{21}\right)+\phi \left( B_{21} \right)+\phi \left(A_{12}\right)+\phi \left(B_{12} \right)\right)\\
=&p_n\left(\phi \left( I\right),\phi \left( \frac{I}{1+\eta}\right), \cdots, \phi \left( \frac{I}{1+\eta}\right),\phi \left( \frac{T_{12}}{1+\eta}\right),\phi \left( A_{21}\right)\right)\\
&+p_n\left(\phi \left( I\right),\phi \left( \frac{I}{1+\eta}\right), \cdots, \phi \left( \frac{I}{1+\eta}\right),\phi \left( \frac{T_{12}}{1+\eta}\right),\phi \left( B_{21} \right)\right)\\
&+p_n\left(\phi \left( I\right),\phi \left( \frac{I}{1+\eta}\right), \cdots, \phi \left( \frac{I}{1+\eta}\right),\phi \left( \frac{T_{12}}{1+\eta}\right),\phi \left(A_{12}\right)\right)\\
&+p_n\left(\phi \left( I\right),\phi \left( \frac{I}{1+\eta}\right), \cdots, \phi \left( \frac{I}{1+\eta}\right),\phi \left( \frac{T_{12}}{1+\eta}\right),\phi \left(B_{12} \right)\right)\\
=&\phi \left( p_n\left( I,\frac{I}{1+\eta}, \cdots, \frac{I}{1+\eta},\frac{T_{12}}{1+\eta}, A_{21} \right)\right)+\phi \left( p_n\left( I,\frac{I}{1+\eta}, \cdots, \frac{I}{1+\eta},\frac{T_{12}}{1+\eta}, B_{21} \right)\right)\\
&+\phi \left( p_n\left( I,\frac{I}{1+\eta}, \cdots, \frac{I}{1+\eta},\frac{T_{12}}{1+\eta}, A_{12} \right)\right)+\phi \left( p_n\left( I,\frac{I}{1+\eta}, \cdots, \frac{I}{1+\eta},\frac{T_{12}}{1+\eta}, B_{12} \right)\right)\\
=&\phi(T_{12}A_{21})+\phi(T_{12}B_{21})+\phi(\eta A_{12}T_{12}^{\ast})+\phi(\eta B_{12}T_{12}^{\ast}).
\end{aligned}
$$

\noindent{\bf Claim 9.}  For any $A, B\in \mathcal{A}$ and $T_{12}\in \mathcal{A}_{12}$, we have 
$$
\begin{aligned}
&\phi \left( p_n\left( I,\frac{I}{1+\eta}, \cdots, \frac{I}{1+\eta},\frac{T_{12}}{1+\eta}, A+B \right)\right)\\
=&\phi \left( p_n\left( I,\frac{I}{1+\eta}, \cdots, \frac{I}{1+\eta},\frac{T_{12}}{1+\eta}, A \right)\right)+\phi \left( p_n\left( I,\frac{I}{1+\eta}, \cdots, \frac{I}{1+\eta},\frac{T_{12}}{1+\eta}, B \right)\right)\\
\end{aligned}
$$

We can write $A$ and $B$ as $A=\sum_{i,j=1}^{2}A_{ij}$ and $B=\sum_{i,j=1}^{2}B_{ij}$.By Claims 5, 6 and 8 in Section \ref{xxsec3}, we obtain that 
$$
\begin{aligned}
&\phi \left( p_n\left( I,\frac{I}{1+\eta}, \cdots, \frac{I}{1+\eta},\frac{T_{12}}{1+\eta}, A+B \right)\right)\\
=&\phi(T_{12}A_{21}+T_{12}B_{21}+\eta A_{12}T_{12}^{\ast}+\eta B_{12}T_{12}^{\ast}+T_{12}A_{22}+T_{12}B_{22}+\eta A_{22}T_{12}^{\ast}+\eta B_{22}T_{12}^{\ast})\\
=&\phi(T_{12}A_{21}+T_{12}B_{21}+\eta A_{12}T_{12}^{\ast}+\eta B_{12}T_{12}^{\ast})+\phi(T_{12}A_{22}+T_{12}B_{22})+\phi(\eta A_{22}T_{12}^{\ast}+\eta B_{22}T_{12}^{\ast}))\\
\end{aligned}
$$
$$
\begin{aligned}
=&\phi(T_{12}A_{21})+\phi(T_{12}B_{21})+\phi(\eta A_{12}T_{12}^{\ast})+\phi(\eta B_{12}T_{12}^{\ast})+\phi(T_{12}A_{22})+\phi(T_{12}B_{22}))\\
&+\phi(\eta A_{22}T_{12}^{\ast})+\phi(\eta B_{22}T_{12}^{\ast}))\\
=&\phi(T_{12}A_{21}+\eta A_{12}T_{12}^{\ast})+\phi(T_{12}A_{22})+\phi(\eta A_{22}T_{12}^{\ast})+\phi(T_{12}B_{21}+\eta B_{12}T_{12}^{\ast})\\
&+\phi(T_{12}B_{22}))+\phi(\eta B_{22}T_{12}^{\ast}))\\
=&\phi(T_{12}A_{21}+\eta A_{12}T_{12}^{\ast})+T_{12}A_{22})+\eta A_{22}T_{12}^{\ast})+\phi(T_{12}B_{21}+\eta B_{12}T_{12}^{\ast})+T_{12}B_{22}))+\eta B_{22}T_{12}^{\ast}))\\
=&\phi \left( p_n\left( I,\frac{I}{1+\eta}, \cdots, \frac{I}{1+\eta},\frac{T_{12}}{1+\eta}, A \right)\right)+\phi \left( p_n\left( I,\frac{I}{1+\eta}, \cdots, \frac{I}{1+\eta},\frac{T_{12}}{1+\eta}, B \right)\right).
\end{aligned}
$$

We now ready to prove Theorem \ref{6}. For $A, B \in \mathcal{A}$, we can find $T \in \mathcal{A}$ such that $\phi(T)=\phi(A)+\phi(B)$. By Lemma \ref{1}, there exists a projection $P$ with $\underline{P}=0$ and $\overline{P}=I$. For any $S \in \mathcal{A}$, by Lemma \ref{3} and Claim 9 in Section \ref{xxsec3}, we have
$$
\begin{aligned}
&\phi \left( p_n\left( I,\frac{I}{1+\eta}, \cdots, \frac{I}{1+\eta},\frac{PS(I-P)}{1+\eta}, T \right)\right)\\
=&\phi \left( p_n\left( I,\frac{I}{1+\eta}, \cdots, \frac{I}{1+\eta},\frac{PS(I-P)}{1+\eta}, A \right)\right)\\
&+\phi \left( p_n\left( I,\frac{I}{1+\eta}, \cdots, \frac{I}{1+\eta},\frac{PS(I-P)}{1+\eta}, B \right)\right)\\
=&\phi \left( p_n\left( I,\frac{I}{1+\eta}, \cdots, \frac{I}{1+\eta},\frac{PS(I-P)}{1+\eta}, A+B \right)\right),
\end{aligned}
$$
which implies that
$$
p_n\left( I,\frac{I}{1+\eta}, \cdots, \frac{I}{1+\eta},\frac{PS(I-P)}{1+\eta}, T \right)=p_n\left( I,\frac{I}{1+\eta}, \cdots, \frac{I}{1+\eta},\frac{PS(I-P)}{1+\eta}, A+B\right).
$$
Thus we have $T(I-P)=(A+B)(I-P)$ by Lemma \ref{2}. 

Similarly, we have $\overline{I-P}=I$ and $\underline{I-P}=0$, and the above argument implies that $TP=(A+B)P$. Consequently, we have $T=A+B$, which completes the proof.

\end{proof}

\section{Linearity}
\label{xxsec4}

\begin{theorem} \label{Th2}
Let $\mathcal{A}$ and $\mathcal{B}$ be two von Neumann algebras, one of which has no central abelian projections, let $\eta \neq -1$ be a non-zero complex number, and let $\phi:\mathcal{A}\longrightarrow \mathcal{B}$ be a not necessarily linear bijection. Then $\phi$ preverves the Jordan $\eta$-$\ast$-$n$-product if and only if one of the following statements holds:

\begin{enumerate}
\item[(a)] $\eta \in \Bbb{R}$ and there exists a central projection $p\in \mathcal{A}$ such that $\phi(p)$ is a central projection in $\mathcal{B}$,$\phi\mid_{\mathcal{A}p}:\mathcal{A}p\rightarrow \mathcal{B}\phi(p)$ is a linear $\ast$-isomorphism and $\phi\mid_{\mathcal{A}(I-p)}:\mathcal{A}(I-p)\rightarrow \mathcal{B}(I-\phi(p))$ a conjugate linear $\ast$-isomorphism,

\item[(b)]  $\eta \notin \Bbb{R}$ and $\phi$ is a linear $\ast$-isomorphism.
\end{enumerate}
\end{theorem}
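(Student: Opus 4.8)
\textbf{Proof plan for Theorem \ref{Th2}.}

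The plan is to combine the additivity theorem \ref{6} with a bootstrapping argument that upgrades additivity to full (conjugate-)linearity and multiplicativity, following the strategy of Dai--Lu \cite{Dai} but adapted to the $n$-product. Since the ``if'' direction is a routine verification (one checks directly that a linear $\ast$-isomorphism, or a sum of a linear and a conjugate linear $\ast$-isomorphism supported on complementary central projections, preserves $p_n$), I concentrate on the ``only if'' direction. By Theorem \ref{6} we already know $\phi$ is additive; moreover, by symmetry of the hypothesis we may and do assume $\mathcal{A}$ (and then, as will follow, $\mathcal{B}$) has no central abelian projections.

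First I would pin down $\phi$ on the identity and on central elements. Using Lemma \ref{4}, the value $\phi(A)$ is determined by $\phi(I)$, $\phi(I/(1+\eta))$ and $\phi(A/(1+\eta))$; combined with additivity this forces $\phi$ to be rational-homogeneous, and a standard argument (expressing real scalars via idempotent-type relations $p_n(\cdots, \lambda P, \cdots)$ as used throughout Section \ref{xxsec3}) extends this to real-homogeneity. Next I would analyze $\phi(iI)$: writing $\phi(iI) = Z + Z'$ with $Z$ self-adjoint and $Z'$ skew, the relation $p_n(I,\dots,I,iI)$ together with $iI \Diamond_\eta iI$-type computations separates the two cases $\eta \in \mathbb{R}$ versus $\eta \notin \mathbb{R}$: in the real case one obtains a central projection $p$ governing where $\phi$ is complex-linear and where it is conjugate-linear, while in the non-real case $Z'$ is forced to vanish and $\phi$ is complex-linear outright. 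This is the step where the hypothesis $\eta \neq -1$ and the distinction ``$\eta$ real vs.\ not'' genuinely enters, and I expect it to be the main obstacle: one must carefully track how $\eta$ and $\overline{\eta}$ interact in the $(n{-}1)$-fold iterated product, since unlike the $2$- and $3$-product cases the coefficients are more intricate, and one needs Lemma \ref{2} and the projection $P$ of Lemma \ref{1} to kill off-diagonal error terms.

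Having established (conjugate-)linearity on the appropriate central summands, I would then show $\phi$ preserves adjoints and products. For the $\ast$-property: apply the preservation identity with most slots equal to $I/(1+\eta)$ and exploit $A \Diamond_\eta A^\ast$-symmetry to deduce $\phi(A^\ast) = \phi(A)^\ast$ on the linear part (resp.\ $\phi(A^\ast)=\phi(A)^\ast$ with the conjugation absorbed on the conjugate-linear part). For multiplicativity: reduce $p_n$ to the two-variable product via Lemma \ref{4}, so that $\phi(A \Diamond_\eta B) = \phi(A) \Diamond_\eta \phi(B)$ for all $A,B$; then, using additivity, the already-proven $\ast$-preservation, and real-homogeneity, separate the ``$AB$'' and ``$\eta BA^\ast$'' contributions by polarization in $B$ (replace $B$ by $B$, $iB$, $B^\ast$, etc.) to extract $\phi(AB) = \phi(A)\phi(B)$. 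Finally I would check that the central projection $p$ from the $\eta \in \mathbb{R}$ case is genuinely central in $\mathcal{A}$ with $\phi(p)$ central in $\mathcal{B}$, that $\phi$ restricted to $\mathcal{A}p$ and $\mathcal{A}(I-p)$ are bijective onto $\mathcal{B}\phi(p)$ and $\mathcal{B}(I-\phi(p))$ respectively, and assemble the pieces into statements (a) and (b). The routine polarization computations I would relegate to short displayed equations rather than spelling out in full.
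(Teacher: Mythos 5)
Your outline has the right global shape (additivity from Theorem \ref{6}, then normalization, multiplicativity, $\ast$-preservation, and a final case split on whether $\eta$ is real), but it passes over the step that the paper identifies as the actual difficulty, and the order in which you propose the remaining steps is circular. The key gap is your sentence ``reduce $p_n$ to the two-variable product via Lemma \ref{4}, so that $\phi(A\Diamond_\eta B)=\phi(A)\Diamond_\eta\phi(B)$.'' Lemma \ref{4} only gives $\phi(A)=p_n\bigl(\phi(I),\phi(I/(1+\eta)),\dots,\phi(A/(1+\eta))\bigr)$; to collapse the right-hand side to a genuine two-variable product $\phi(A)\Diamond_\eta\phi(B)$ you must already know the values $\phi(I)$, $\phi(I/(1+\eta))$, $\phi(\alpha I)$, etc. Establishing these normalizations \emph{without} assuming $\phi(I)=I$ is precisely the point of the paper (the introduction stresses that the earlier results of Huo et al.\ depend heavily on $\phi(I)=I$ and that removing this hypothesis is a main contribution). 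The paper does this in Lemma \ref{7} by a three-case analysis on $\eta$ ($\eta=1$; $|\eta|=1,\eta\neq 1$; $|\eta|\neq 1$), producing a scalar $\alpha$ with $\alpha+\eta\overline{\alpha}\in\{0,1\}$ such that $\phi(\alpha I)=\alpha I$, $\phi$ preserves the eigenspace of $\ast$ with eigenvalue $\overline{\alpha}/\alpha$, and $\tfrac{1}{\alpha}\phi(\alpha P)$ is a projection iff $P$ is. None of this appears in your plan, and without it neither your multiplicativity step nor your $\ast$-preservation step can start.

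The second problem is ordering. You propose to prove real-homogeneity and to analyze $\phi(\mathrm{i}I)$ \emph{before} multiplicativity, and then to prove multiplicativity by polarization using complex (or conjugate) linearity. In the paper the dependencies run the other way: multiplicativity (Lemma \ref{8}, proved via the Peirce decomposition of $\mathcal{B}$ relative to $Q_1=\tfrac{1}{\alpha}\phi(\alpha P_1)$ and Lemma \ref{2}, not by polarization) is what yields $\phi(\alpha A)=\phi(\alpha I)\phi(A)=\alpha\phi(A)$, preservation of positivity ($\phi(C^2)=\phi(C)^2$), hence real-linearity by squeezing between rational multiples of $I$, and only then the split into the linear and conjugate-linear central summands via the projection built from $\phi(\mathrm{i}I)$. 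Your ``standard argument'' for upgrading rational- to real-homogeneity has no visible mechanism without an order/positivity argument, and positivity preservation requires multiplicativity; likewise your polarization in $B$ (or $A$) requires $\phi(\mathrm{i}A)=\pm\mathrm{i}\phi(A)$, which you have not yet established at that stage. So as written the plan is circular: linearity is used to prove multiplicativity, and multiplicativity is needed to prove linearity. To repair it you would need to insert the analogue of Lemma \ref{7} at the start and then follow essentially the paper's order.
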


\begin{proof}
The proof will be organized in the following lemmas. First we not that $\phi$ is additive. In fact, if $\mathcal{A}$ has no central abelian projection, then Theorem \ref{6} shows that $\phi$ is additive. If $\mathcal{B}$ has no central abelian projections, then $\phi^{-1}:\mathcal{B}\longrightarrow \mathcal{A}$ is not necessarily linear bijection which preserves the Jordan $\eta$-$\ast$-$n$-product. Applying Theorem \ref{6} to $\phi^{-1}$, we know that $\phi^{-1}$ is additive and thus $\phi$ is additive. Without loss of generality, we assume that $\mathcal{B}$ has no central abelian projections in the following.

\begin{lemma} \label{7}
There exists a non-zero scalar $\alpha$ satisfying the follwing conditions:
\begin{enumerate}
\item[(a)] $\frac{1}{\alpha}\phi(\alpha P)$ is a projection $\mathcal{B}$ if and only if $P$ is a projection $\mathcal{A}$;

\item[(b)]  For any $A \in \mathcal{A},\phi(A)^{\ast}=\frac{\overline{\alpha}}{\alpha}\phi(A)$ if and only if $A^{\ast}=\frac{\overline{\alpha}}{\alpha}A$;

\item[(c)] $\phi(\alpha I)={\alpha}I$.
\end{enumerate}
Further, $\alpha \notin \Bbb{R}$ when $\eta \notin \Bbb{R}$.
\end{lemma}

\begin{proof}
We prove the result in three cases.

{\bf Case 1.} $\eta =1.$

Choosing $\lambda \in \Bbb{C}\setminus\{0\}$ with $\lambda+\overline{\lambda}=0$.Since $\phi$ is surjective, there exists $B, C \in \mathcal{A}$ such that $\phi(B)=I$ and $\phi(C)=\frac{I}{2}$ (In the following paper, $B, C$ always satisfy their corresponding ability). Then for any $A \in \mathcal{A}$, we have
$$
\begin{aligned}
0=&\phi \left( p_n\left( \lambda I,A, B,C \cdots, C \right)\right)\\
=& p_n\left( \phi(\lambda I),\phi(A),\phi( B),\phi(C) \cdots, \phi(C) \right)\\
=& p_n\left( \phi(\lambda I),\phi(A),I,\frac{I}{2} \cdots, \frac{I}{2} \right)\\
=&\phi(\lambda I)(\phi(A)+\phi(A)^\ast)+(\phi(A)+\phi(A)^\ast)\phi(\lambda I)^\ast.
\end{aligned}
$$
Taking $A=B$ in the above equation, we have $\phi(\lambda I)^\ast =-\phi(\lambda I)$, which implies that $\phi(\lambda I)D=D\phi(\lambda I)$ for all $D=D^\ast \in \mathcal{B}$. Let $D_1=\frac{D+D^\ast}{2}$ and $D_2=\frac{D-D^\ast}{2\text{i}}$. Since $D=D_1+\text{i}D_2$ for all $D \in \mathcal{B}$, we have $\phi(\lambda I)D=D\phi(\lambda I)$. Thus $\phi(\lambda I)\in \mathcal{Z}(\mathcal{B})$. Similarly, we have $\phi^{-1}(\lambda I)\in \mathcal{Z}(\mathcal{A})$.

\noindent{\bf Claim 1.1.} For each $A\in\mathcal{A},\phi(A)^\ast=-\phi(A)$ if and only if $A^\ast=-A.$

Let $A\in\mathcal{A}$ such that $A^\ast=-A.$ Then by Eq.(3), we have
$$
\begin{aligned}
0=&\phi \left( p_n\left(  B,C \cdots, C, A,\phi^{-1}(\lambda I)\right)\right)\\
=& p_n\left( I,\frac{I}{2} \cdots, \frac{I}{2},\phi(A),\lambda I \right)\\
=&2\lambda(\phi(A)+\phi(A)^\ast).
\end{aligned}
$$
Thus we have $\phi(A)^\ast=-\phi(A)$, which proves the sufficiency.

To prove the necessity, we note that $\phi^{-1}$ also preserves the Jordan 1-$\ast$-n-product. Since $\phi$ is injective, there exists $B^{\prime}, C^{\prime} \in \mathcal{B}$ such that $\phi(B^{\prime})=I$ and $\phi^{-1}(C^{\prime})=\frac{I}{2}$ (In the following paper, $B^{\prime}, C^{\prime}$ always satisfy their corresponding ability). If $\phi(A)^\ast=-\phi(A)$, then by Eq.(3), we have
$$
\begin{aligned}
0=&\phi^{-1} \left( p_n\left(  B^{\prime},C^{\prime} \cdots, C^{\prime},\phi(A),\phi(\lambda I)\right)\right)\\
=& p_n\left( I,\frac{I}{2} \cdots, \frac{I}{2}, A,\lambda I \right)\\
=&2\lambda(A+A^\ast),
\end{aligned}
$$
which implies that $A^\ast=-A.$

\noindent{\bf Claim 1.2.} $\phi(\mathcal{Z}(\mathcal{A}))=\mathcal{Z}(\mathcal{B})$.

Let  $Z \in \mathcal{Z}(\mathcal{A})$ be arbitrary. For every  $A^\ast=-A \in \mathcal{A}$, by Eq.(3) we have
 $$
\begin{aligned}
0=&\phi \left( p_n\left(  B,C \cdots, C, A,Z\right)\right)\\
=& p_n\left( I,\frac{I}{2} \cdots, \frac{I}{2},\phi(A),\phi(Z) \right)\\
=&\phi(A)\phi(Z)+\phi(Z)\phi(A)^\ast.
\end{aligned}
$$
That is $\phi(A)\phi(Z)=-\phi(Z)\phi(A)^\ast$ holds true for all $A^\ast=-A \in \mathcal{A}$. Since $\phi$ preserves conjugate self-adjoint elements, it follows that $D\phi(Z)=\phi(Z)D$ holds true for all $D=-D^\ast \in \mathcal{B}$. Since for every $D \in \mathcal{B}$, we have $D=D_1+\text{i}D_2$, where $D_1=\frac{D+D^\ast}{2}$ and $D_2=\frac{D-D^\ast}{2\text{i}}$ are self-conjugate self-adjoint elementd. Hence $D\phi(Z)=\phi(Z)D$ holds true for all $D \in \mathcal{A}$. Then $\phi(Z)\in \mathcal{Z}(\mathcal{B})$, which implies that$\phi(\mathcal{Z}(\mathcal{A}))\subseteq \mathcal{Z}(\mathcal{B})$. Thus $\phi(\mathcal{Z}(\mathcal{A}))=\mathcal{Z}(\mathcal{B})$ by considering $\phi^{-1}$.

In the following we assume $\alpha=1$.

\noindent{\bf Claim 1.3.} Let $P$ be a projection in $\mathcal{A}$ and set $Q_P=\frac{1}{2}(\phi(P)+\phi(P)^\ast)$. Then $Q_P$ is a projection in $\mathcal{B}$ with $\phi(P)=\phi(I)Q_P$.

Let $P$ be a projection in $\mathcal{A}$. Then by Claim 1.2, we have 
$$
\begin{aligned}
2^{n-1}\phi(P)=&\phi \left( p_n\left(  I \cdots,I,P,I\right)\right)\\
=& p_n\left( \phi(I),\cdots, \phi(I),\phi(P),\phi(I) \right)\\
=&M_{n-2}\Diamond \phi(P)\Diamond \phi(I)\\
=&(M_{n-2}\phi(P)+\phi(P)M_{n-2}^{\ast})\Diamond \phi(I)\\
=&\phi(I)(M_{n-2}+M_{n-2}^{\ast})(\phi(P)+\phi(P)^{\ast}).
\end{aligned}
$$
Here, we should notice that $M_{n-2} \in \mathcal{Z}(\mathcal{B})$ if $\phi(\mathcal{Z}(\mathcal{A}))=\mathcal{Z}(\mathcal{B})$ and the additivity of $\phi$.

By Eq.(4), we obtain
$$
2\phi(P)=\phi(I)(\phi(P)+\phi(P)^{\ast})=2\phi(I)Q_P,
$$
that is
$$
\phi(P)=\phi(I)Q_P, \eqno{(6)}
$$
On the other hand, considering $M_{n-2} \in \mathcal{Z}(\mathcal{B})$ and using Eq.(4), we obtain
$$
\begin{aligned}
2^{n-1}\phi(P)=&\phi \left( p_n\left(  I \cdots,I,P,P\right)\right)\\
=& p_n\left( \phi(I),\cdots, \phi(I),\phi(P),\phi(P) \right)\\
=&M_{n-2}\Diamond \phi(P)\Diamond \phi(P)\\
=&(M_{n-2}\phi(P)+\phi(P)M_{n-2}^{\ast})\Diamond \phi(P)\\
=&(M_{n-2}+M_{n-2}^{\ast})\phi(P)(\phi(P)+\phi(P)^{\ast})\\
=&2^{n-1}\phi(P)Q_P\\
\end{aligned}
$$
Substituting Eq.(6) into the above identity, we have
$$
\phi(P)=\phi(I)Q_P^2.
$$
This together with the previous result implies that $Q_P=Q_P^2$. Since $Q_P$ is self-adjoint, $Q_P$ is a projection.

\noindent{\bf Claim 1.4.} Let $P$ be a projection in $\mathcal{A}$. Suppose that $A$ in $\mathcal{A}$ is such that $A=PA(I-P)$. Then $\phi(A)=Q_P\phi(A)+\phi(A)Q_P$.

Noticing $\phi(P)=\phi(I)Q_P$, we have
$$
\begin{aligned}
2^{n-2}\phi(A)=&\phi \left( p_n\left(  I \cdots,I,P,A\right)\right)\\
=& p_n\left( \phi(I),\cdots, \phi(I),\phi(P),\phi(A) \right)\\
=&M_{n-2}\Diamond \phi(P)\Diamond \phi(A)\\
=&(M_{n-2}\phi(P)+\phi(P)M_{n-2}^{\ast})\Diamond \phi(A)\\
=&(M_{n-2}+M_{n-2}^{\ast})(\phi(P)\phi(A)+\phi(A)\phi(P)^{\ast})\\
=&2^{n-2}(\phi(I)Q_P\phi(A)+\phi(A)Q_P\phi(I)^{\ast}).
\end{aligned}
$$
That is
$$
\phi(A)=\phi(I)Q_P\phi(A)+\phi(A)Q_P\phi(I)^{\ast}.
$$
Since $\phi(I)+\phi(I)^\ast=2I$ by Eq.(5) and $\phi(I), \phi(I)^\ast\in \mathcal{Z}(\mathcal{B})$ by Claim 1.2, multiplying both sides of the above equation by $Q_P$ from the left and right respectively, we get that $Q_P\phi(A)Q_P=0$. Multiplying both sides of the above equation by $I-Q_P$ from the left and right respectively, we get that $(I-Q_P)\phi(A)(I-Q_P)=0$. Then we obtain $\phi(A)=Q_P\phi(A)+\phi(A)Q_P$.

\noindent{\bf Claim 1.5.} $\phi(I)=I$.

Since $\mathcal{B}$ has no central abelian projections, by Lemma \ref{1}, we can choose a projection $Q\in \mathcal{B}$ satisfying $\underline{Q}=0$ and $\overline{Q}=I$. Let $B$ be in $\mathcal{B}$ such that $B=QB(I-Q)$. Let $P=\frac{1}{2}\left(\phi^{-1}(Q)+\phi^{-1}(Q)^\ast \right)$. Applying the previous two claims to $\phi^{-1}$, we know that $P$ is a projection and $\phi^{-1}(B)=P\phi^{-1}(B)+\phi^{-1}(B)P^{\ast}$. Moreover, 
$$
\phi(P)=\frac{1}{2}\phi(\phi^{-1}(Q)I+I\phi^{-1}(Q)^{\ast})=\phi(I)Q.
$$
Hence
$$
B=\phi(P\phi^{-1}(B)+\phi^{-1}(B)P^{\ast})=\phi(I)QB+B(\phi(I)Q)^{\ast}=\phi(I)B.
$$
Since such $B$ is arbitrary and $\overline{I-Q}=I$, it follows form Lemma \ref{2} that $(I-\phi(I))Q=0$. Hence sine $I-\phi(I)\in \mathcal{Z}(\mathcal{A})$ and $\overline{Q}=I$, it follows that  $I-\phi(I)=0$, proving the claim.

\noindent{\bf Claim 1.6.} $\phi(A)=\phi(A)^{\ast}$.

By Claim 1.5, we have
$$
\begin{aligned}
2^{n-2}\phi(A+A^{\ast})=&\phi \left( p_n\left(  I \cdots,I,A,I\right)\right)\\
=& p_n\left( \phi(I),\cdots, \phi(I),\phi(A),\phi(I), \right)\\
=& p_n\left( I,\cdots, I,\phi(A),I, \right)\\
=&2^{n-3}I\Diamond \phi(A)\Diamond I\\
=&2^{n-2}(\phi(A)+\phi(A)^{\ast}).
\end{aligned}
$$
We have $\phi(A)^{\ast}=\phi(A^{\ast})=\phi(A)$ if and only if $A^{\ast}=A$. 

{\bf Case 2.} $|\eta|=1$ but $\eta \neq 1$.

Sine $|\eta|=1$, there exists $\alpha\in \Bbb{C}\setminus \{ 0\}$ such that $\alpha+\eta\overline{\alpha}=0$. Take for example $\alpha$ a real multiple of $\text{i}e^{\text{i}\frac{\theta}{2}}$, where $\eta=e^{\text{i}\theta}$. So we can choose such $\alpha$ such that its real part is an entire number privided $\eta \neq 1$.

Note that $\frac{\overline{\alpha}}{\alpha}=-\overline{\eta}$.

\noindent{\bf Claim 2.1.} For each $A\in\mathcal{A},\phi(A)^\ast=-\overline{\eta}\phi(A)$ if and only if $A^\ast=-\overline{\eta}A.$

For any $A\in\mathcal{A}$, we have
$$
\begin{aligned}
0=&\phi \left( p_n\left( \alpha I,A, B,\cdots, B \right)\right)\\
=& p_n\left( \phi(\alpha I),\phi(A),\phi( B),\cdots, \phi(B) \right)\\
=& p_n\left( \phi(\alpha I),\phi(A),I,\cdots, I \right)\\
=& p_{n-1}\left( \phi(\alpha I)\phi(A)+\eta\phi(A)\phi(\alpha I)^{\ast},I,\cdots, I \right)\\
=& p_{n-2}\left( \phi(\alpha I)(\phi(A)+\phi(A)^{\ast})+\eta(\phi(A)+\phi(A)^{\ast})\phi(\alpha I)^{\ast},I,\cdots, I \right)\\
=&(n-2)( \phi(\alpha I)(\phi(A)+\phi(A)^{\ast})+\eta(\phi(A)+\phi(A)^{\ast})\phi(\alpha I)^{\ast}).
\end{aligned}
\eqno{(7)}
$$
Taking $A=B$ in the above equation, we have $\phi(\alpha I)^\ast =-\frac{1}{\eta}\phi(\alpha I) =-\overline{\eta}\phi(\alpha I)$. Then Eq.(7) becomes
$$
\phi(\alpha I)(\phi(A)+\phi(A)^{\ast})-(\phi(A)+\phi(A)^{\ast})\phi(\alpha I)=0,
$$
 which implies that $\phi(\alpha I)D=D\phi(\alpha I)$ for all $D=D^\ast \in \mathcal{B}$.Thus we have $\phi(\alpha I)D=D\phi(\alpha I)$ for all $B\in\mathcal{B}$. So $\phi(\alpha I)\in \mathcal{Z}(\mathcal{B})$. 

Similarly, we have $\phi^{-1}(\alpha I)\in \mathcal{Z}(\mathcal{A})$.

Let $A\in\mathcal{A}$ such that $A^\ast=-\overline{\eta}A.$ Then
$$
\begin{aligned}
0=&\phi \left( p_n\left(  A,\phi^{-1}(\alpha I),B,\cdots,B\right)\right)\\
=& p_n\left( \phi(A),\alpha I,I \cdots, I \right)\\
=& p_{n-1}\left(\alpha\phi(A)+\eta\alpha \phi(A)^{\ast},I,\cdots, I \right)\\
=& p_{n-2}\left((\alpha+\overline{\alpha})\phi(A)+\eta(\alpha+\overline{\alpha})\phi(A)^{\ast},I,\cdots, I \right)\\
=&(n-2)(\alpha+\overline{\alpha})(\phi(A)+\eta\phi(A)^{\ast}).
\end{aligned}
$$
Since $\eta \neq 1$, we have $\alpha+\overline{\alpha}\neq 0$. Thus we have $\phi(A)^\ast=-\overline{\eta}\phi(A)$, which proves the sufficiency.

To prove the necessity, we note that $\phi^{-1}$ also preserves the Jordan $\eta$-$\ast$-$n$-products. Since $\phi$ is injective, there exists $B^{\prime}\in \mathcal{B}$ such that $\phi(I)=B^{\prime}$. If $\phi(A)^\ast=-\overline{\eta}\phi(A)$, then we have
$$
\begin{aligned}
0=&\phi^{-1}\left( p_n\left(\phi(A),\phi(\alpha I),B^{\prime},\cdots,B^{\prime}\right)\right)\\
=& p_n\left( A,\alpha I,I \cdots, I \right)\\
=& p_{n-1}\left(\alpha A+\eta\alpha A^{\ast},I,\cdots, I \right)\\
=& p_{n-2}\left((\alpha+\overline{\alpha})A+\eta(\alpha+\overline{\alpha})A^{\ast},I,\cdots, I \right)\\
=&(n-2)(\alpha+\overline{\alpha})(A+\eta A^{\ast}).
\end{aligned}
$$
which implies that $A^\ast=-\overline{\eta}A.$

\noindent{\bf Claim 2.2.} $\phi(\mathcal{Z}(\mathcal{A}))=\mathcal{Z}(\mathcal{B})$.

Let  $Z \in \mathcal{Z}(\mathcal{A})$ be arbitrary. Suppose that $D$ is a selfadjoint element in $\mathcal{B}$. Then $(\alpha D)^\ast=-\overline{\eta} (\alpha D)$ and hence $\phi^{-1}(\alpha D)^\ast=-\overline{\eta} \phi^{-1}(\alpha D)$ by Claim 2.1. Therefore by Eq.(2),
$$
\begin{aligned}
0=&\phi \left( p_n\left( I,\frac{I}{1+\eta}, \cdots, \frac{I}{1+\eta}, \frac{\phi^{-1}(\alpha D)}{1+\eta},Z\right)\right)\\
=& p_n\left(\phi \left( I\right),\phi \left(\frac{I}{1+\eta}\right), \cdots, \phi \left(\frac{I}{1+\eta}\right), \phi \left(\frac{\phi^{-1}(\alpha D)}{1+\eta}\right),\phi \left(Z \right)\right)\\
=&p_{n-1} \left( \phi \left( I\right),\phi \left(\frac{I}{1+\eta}\right), \cdots, \phi \left(\frac{I}{1+\eta}\right), \phi \left(\frac{\phi^{-1}(\alpha D)}{1+\eta}\right)\right)\Diamond_{\eta}\phi \left(Z \right)\\
=&\alpha D\Diamond_{\eta}\phi(Z)\\
=&\alpha (D\phi(Z)-\phi(Z)D)
\end{aligned}
$$
for all selfadjoint elements $D$. It follows that $\phi(Z)\in\mathcal{Z}(\mathcal{B})$ for all $Z\in \mathcal{Z}(\mathcal{A})$. So $\phi(\mathcal{Z}(\mathcal{A}))\subseteq\mathcal{Z}(\mathcal{B})$. hence $\phi(\mathcal{Z}(\mathcal{A}))=\mathcal{Z}(\mathcal{B})$ by considering $\phi^{-1}$.

\noindent{\bf Claim 2.3.} For each $A\in\mathcal{A},\phi(A)^\ast=-\overline{\eta}^{2}\phi(A)$ if and only if $A^\ast=-\overline{\eta}^{2}A.$

Now we choose $A\in\mathcal{A}$ with $A^\ast=-\overline{\eta}^{2}A.$ Then by Eq.(2) we have
$$
\begin{aligned}
2\phi(A)=&\phi\left(p_n\left(I,\frac{I}{1+\eta}, \cdots, \frac{I}{1+\eta},\frac{A}{\alpha(1+\eta)},\alpha I \right)\right )\\
=&p_n\left(\phi \left(I \right ),\phi \left(\frac{I}{1+\eta} \right ), \cdots, \phi \left(\frac{I}{1+\eta}\right ),\phi \left(\frac{A}{\alpha(1+\eta)}\right ),\phi \left(\alpha I \right)\right) \\
=&p_{n-2}\left(\phi \left(I \right ),\phi \left(\frac{I}{1+\eta} \right ), \cdots, \phi \left(\frac{I}{1+\eta}\right )\right)\Diamond_{\eta}\phi \left(\frac{A}{\alpha(1+\eta)}\right )\Diamond_{\eta}\phi \left(\alpha I \right) \\
=&\phi \left(I \right )\Diamond_{\eta}\phi \left(\frac{A}{\alpha(1+\eta)}\right )\Diamond_{\eta}\phi \left(\alpha I \right)\\
=&\phi \left(\alpha I \right)\phi \left(I \right )\left( \phi \left(\frac{A}{\alpha(1+\eta)}\right )+\phi \left(\frac{A}{\alpha(1+\eta)}\right )^{\ast}\right)\\
&+\eta\phi \left(\alpha I \right)\left( \phi \left(\frac{A}{\alpha(1+\eta)}\right )+\phi \left(\frac{A}{\alpha(1+\eta)}\right )^{\ast}\right)\phi \left(I \right )^{\ast}.
\end{aligned}
$$

Taking the adjoint and noting that $\phi(\alpha I)^\ast =-\overline{\eta}\phi(\alpha I)$, we get
$$
\begin{aligned}
2\phi(A)^{\ast}=&-\overline{\eta}\phi\left(\alpha I\right)\left(\phi \left(\frac{A}{\alpha(1+\eta)}\right )+\phi \left(\frac{A}{\alpha(1+\eta)}\right )^{\ast})\phi(I)^{\ast}\right)\\
&-\overline{\eta}^2\phi\left(\alpha I\right)\phi(I)\left(\phi \left(\frac{A}{\alpha(1+\eta)}\right )+\phi \left(\frac{A}{\alpha(1+\eta)}\right )^{\ast}\right)\\
=&-\overline{\eta}\phi(\alpha I)\frac{1}{\eta}\left(\eta\left(\phi \left(\frac{A}{\alpha(1+\eta)}\right )+\phi \left(\frac{A}{\alpha(1+\eta)}\right )^{\ast}\right)\phi(I)^{\ast}\right)\\
&-\overline{\eta}^2\phi(\alpha I)\left(\phi \left(\frac{A}{\alpha(1+\eta)}\right )+\phi \left(\frac{A}{\alpha(1+\eta)}\right )^{\ast}\right)\\
=&-2\overline{\eta}^2 \phi(A).
\end{aligned}
$$
Thus $\phi(A)^{\ast}=-\overline{\eta}^2 \phi(A).$ By considering $\phi^{-1}$, we establish the claim.

\noindent{\bf Claim 2.4.} $\phi(\alpha I)=\alpha I$.

By a recusion calculation, we have
$$
\begin{aligned}
&p_n\left(A,\frac{I}{1-\overline{\eta}},\cdots,\frac{I}{1-\overline{\eta}}\right)\\
=&p_{n-1}\left(A,\frac{I}{1-\overline{\eta}},\cdots,\frac{I}{1-\overline{\eta}}\right)\\
=&p_{n-2}\left(A,\frac{I}{1-\overline{\eta}},\cdots,\frac{I}{1-\overline{\eta}}\right)\\
=&\cdots\\
=&A.
\end{aligned}
$$
At the same time, by the definition of $\phi$, we also have
$$
\begin{aligned}
\phi(A)=&\phi\left( p_n\left(A,\frac{I}{1-\overline{\eta}},\cdots,\frac{I}{1-\overline{\eta}}\right)\right)\\
=&p_n\left(\phi\left(A\right),\phi\left(\frac{I}{1-\overline{\eta}}\right),\cdots,\phi\left(\frac{I}{1-\overline{\eta}}\right)\right).
\end{aligned}
$$

We assume $A=\phi^{-1}(\text{i}\alpha^2 I)$. Since $(\text{i}\alpha^2 I)^{\ast}=-\overline{\eta}^{2}(\text{i}\alpha^2 I)$, we obtain from Claim 2.2 that $A^{\ast}=-\overline{\eta}^{2}A$. Therefore, noting that $(1-\overline{\eta})\alpha=\alpha+\overline{\alpha}$ is rational, we have
$$
\begin{aligned}
(1-\overline{\eta})\alpha(\text{i}\alpha^2 I)&=\phi((1-\overline{\eta})\alpha A)\\
&=\phi\left( p_n\left(A,\frac{I}{1-\overline{\eta}},\cdots,\frac{I}{1-\overline{\eta}},\alpha I\right)\right)\\
&=p_n\left(\phi\left(A\right),\phi\left(\frac{I}{1-\overline{\eta}}\right),\cdots,\phi\left(\frac{I}{1-\overline{\eta}}\right),\phi\left(\alpha I\right)\right)\\
&=\phi\left(A\right)\Diamond_{\eta}\phi\left(\alpha I\right)\\
&=(1-\overline{\eta})\phi\left(A\right)\phi\left(\alpha I\right)\\
&=(1-\overline{\eta})(\text{i}\alpha^2 I)\phi\left(\alpha I\right).
\end{aligned}
$$
Since $\eta\neq 1$, it follows that $\phi(\alpha I)=\alpha I$.

\noindent{\bf Claim 2.5.} Let $P$ be in $\mathcal{A}$, then $\frac{1}{\alpha}\phi(\alpha P)$ is a projection in $\mathcal{B}$ if and only if $P$ is a projection in $\mathcal{A}$.

To prove the sufficiency. We suppose that $P$ is a projection in $\mathcal{A}$. Since $(\alpha P)^{\ast}=-\overline{\eta}(\alpha P)$, it follows Claim 2.1 that $\phi(\alpha P)^{\ast}=-\overline{\eta}\phi((\alpha P))$. Hence $\left(\frac{1}{\alpha}\phi(\alpha P)\right)^{\ast}=-\frac{\overline{\eta}}{\overline{\alpha}}\phi(\alpha P)=\frac{1}{\alpha}\phi(\alpha P)$, i.e., $\frac{1}{\alpha}\phi(\alpha P)$ is selfadjoint.

It remains to show that $\frac{1}{\alpha}\phi(\alpha P)$ is idempotent. Since $(\alpha P^{\perp})^{\ast}=-\overline{\eta}(\alpha P^{\perp})$, where $P^{\perp}=I-P$, we have $\phi(\alpha P^{\perp})^{\ast}=-\overline{\eta}\phi((\alpha P^{\perp}))$. Hence by Eq.(2), we have
$$
\begin{aligned}
0=&\phi \left( p_n\left( I,\frac{I}{1+\eta}, \cdots, \frac{I}{1+\eta}, \frac{\alpha P^{\perp}}{1+\eta},P\right)\right)\\
=& p_n\left( \phi \left(I\right),\phi \left(\frac{I}{1+\eta}\right), \cdots, \phi \left(\frac{I}{1+\eta}\right), \phi \left(\frac{\alpha P^{\perp}}{1+\eta}\right),\phi \left( P\right)\right)\\
=& p_{n-1}\left( \phi \left(I\right),\phi \left(\frac{I}{1+\eta}\right), \cdots, \phi \left(\frac{I}{1+\eta}\right), \phi \left(\frac{\alpha P^{\perp}}{1+\eta}\right)\right)\Diamond_{\eta}\phi \left( P\right)\\
=&\phi(\alpha P^{\perp})\Diamond_{\eta}\phi(P)\\
=&\phi(\alpha P^{\perp})\phi(P)-\phi(P)\phi(\alpha P^{\perp}).
\end{aligned}
$$
So $\phi(\alpha P^{\perp})\phi(P)=\phi(P)\phi(\alpha P^{\perp})$. Taking the adjoint, we get $\phi(P)^{\ast}\phi(\alpha P^{\perp})=\phi(\alpha P^{\perp})\phi(P)^{\ast}$. Hence by Eq.(2),
$$
\begin{aligned}
0=&\phi \left( p_n\left( I,\frac{I}{1+\eta}, \cdots, \frac{I}{1+\eta}, \frac{P}{1+\eta},\alpha P^{\perp}\right)\right)\\
=& p_n\left( \phi \left(I\right),\phi \left(\frac{I}{1+\eta}\right), \cdots, \phi \left(\frac{I}{1+\eta}\right), \phi \left(\frac{P}{1+\eta}\right),\phi \left( \alpha P^{\perp}\right)\right)\\
=& p_{n-1}\left( \phi \left(I\right),\phi \left(\frac{I}{1+\eta}\right), \cdots, \phi \left(\frac{I}{1+\eta}\right), \phi \left(\frac{P}{1+\eta}\right)\right)\Diamond_{\eta}\phi \left( \alpha P^{\perp}\right)\\
=&\phi(P)\Diamond_{\eta}\phi(\alpha P^{\perp})\\
=&\phi(P)\phi(\alpha P^{\perp})+\eta\phi(\alpha P^{\perp})\phi(P)^{\ast}\\
=&\phi(\alpha P^{\perp})(\phi(P)+\phi(P)^{\ast}).
\end{aligned}
\eqno{(8)}
$$
Since $\eta\neq -1$, we can set $\beta=\frac{\alpha}{1+\eta}$. Then noting $\phi(\beta I)\in \mathcal{Z}(\mathcal{B})$, by Eq.(2) we have
$$
\begin{aligned}
\phi(\alpha P)=&\phi \left( p_n\left( I,\frac{I}{1+\eta}, \cdots, \frac{I}{1+\eta}, \frac{P}{1+\eta},\beta I\right)\right)\\
=& p_n\left( \phi \left(I\right),\phi \left(\frac{I}{1+\eta}\right), \cdots, \phi \left(\frac{I}{1+\eta}\right), \phi \left(\frac{P}{1+\eta}\right),\phi \left( \beta I\right)\right)\\
=& p_{n-1}\left( \phi \left(I\right),\phi \left(\frac{I}{1+\eta}\right), \cdots, \phi \left(\frac{I}{1+\eta}\right), \phi \left(\frac{P}{1+\eta}\right)\right)\Diamond_{\eta}\phi \left( \beta I\right)\\
=&\phi(P)\Diamond_{\eta}\phi(\beta I)\\
=&\phi(\beta I)(\phi(P)+\eta\phi(P)^{\ast}).
\end{aligned}
$$
This toegether with Eq.(8) implies that $\phi(\alpha P^{\perp})\phi(\alpha P)=0$. Hence
$$
\phi(\alpha P)^2=(\phi(\alpha P)+\phi(\alpha P^{\perp}))\phi(\alpha P)=\phi(\alpha I)\phi(\alpha P)=\alpha \phi(\alpha P).
$$
So $\frac{1}{\alpha}\phi(\alpha P)$ is idempotent.

So far we have established the sufficienty. Note that the preceding proof does not use the condition that $\mathcal{B}$ has no central abelian projection. Therefore the previous result can apply to $\phi^{-1}$. Now, if $\frac{1}{\alpha}\phi(\alpha P)$ is a projection, then $P=\frac{1}{\alpha}\phi^{-1}(\alpha(\frac{1}{\alpha}\phi(\alpha P)))$ is a projection, proving the necessity.

{\bf Case 3.} $|\eta|\neq 1$.

Take $\alpha=\frac{1-\eta}{1-|\eta|^2}$, then $\alpha+\eta\overline{\alpha}=1$.

\noindent{\bf Claim 3.1.} $\phi(\alpha I)=\alpha I$.

By Eq.(2), we have
$$
\begin{aligned}
I=\phi(B)=&\phi \left( p_n\left( I,\frac{I}{1+\eta}, \cdots, \frac{I}{1+\eta}, \frac{\alpha I}{1+\eta},B\right)\right)\\
=& p_n\left( \phi \left(I\right),\phi \left(\frac{I}{1+\eta}\right), \cdots, \phi \left(\frac{I}{1+\eta}\right), \phi \left(\frac{\alpha I}{1+\eta}\right),\phi \left( B\right)\right)\\
=& p_{n-1}\left( \phi \left(I\right),\phi \left(\frac{I}{1+\eta}\right), \cdots, \phi \left(\frac{I}{1+\eta}\right), \phi \left(\frac{\alpha I}{1+\eta}\right)\right)\Diamond_{\eta}\phi \left(B\right)\\
=&\phi(\alpha I)\Diamond_{\eta}\phi(B)\\
=&\phi(\alpha I)\Diamond_{\eta}I\\
=&\phi(\alpha I)+\eta\phi(\alpha I)^{\ast}.
\end{aligned}
\eqno{(9)}
$$
This implies that $\phi(\alpha I)+\eta\phi(\alpha I)^{\ast}$ is selfadjoint. So 
$$
\phi(\alpha I)+\eta\phi(\alpha I)^{\ast}=(\phi(\alpha I)+\eta\phi(\alpha I)^{\ast})^{\ast}=\phi(\alpha I)^{\ast}+\overline{\eta}\phi(\alpha I).
$$
Therefore
$$
\phi(\alpha I)^{\ast}=\frac{1-\overline{\eta}}{1-\eta}\phi(\alpha I)=\frac{\overline{\alpha}}{\alpha}\phi(\alpha I).
$$
Putting this in Eq.(9), we get that $\phi(\alpha I)=\alpha I$.

\noindent{\bf Claim 3.2.} For each $A\in\mathcal{A},\phi(A)^{\ast}=\frac{\overline{\alpha}}{\alpha}\phi(A)$ if and only if $A^{\ast}=\frac{\overline{\alpha}}{\alpha}A$.

Let $A$ be in $\mathcal{A}$ such that $A^{\ast}=\frac{\overline{\alpha}}{\alpha}A$. Noting that $1+\eta\frac{\overline{\alpha}}{\alpha}=\frac{1}{\alpha}$, we have that
$$
\begin{aligned}
\phi(A)=&\phi \left( p_n\left( I,\frac{I}{1+\eta}, \cdots, \frac{I}{1+\eta}, \frac{A}{1+\eta},\alpha I\right)\right)\\
=& p_n\left( \phi \left(I\right),\phi \left(\frac{I}{1+\eta}\right), \cdots, \phi \left(\frac{I}{1+\eta}\right), \phi \left(\frac{A}{1+\eta}\right),\phi \left( \alpha I\right)\right)\\
=& p_{n-1}\left( \phi \left(I\right),\phi \left(\frac{I}{1+\eta}\right), \cdots, \phi \left(\frac{I}{1+\eta}\right), \phi \left(\frac{A}{1+\eta}\right)\right)\Diamond_{\eta}\phi \left(\alpha I\right)\\
=&\phi(A)\Diamond_{\eta}\phi(\alpha I)\\
=&\alpha(\phi(A)+\eta\phi(A)^{\ast}.
\end{aligned}
$$
So 
$$
\phi(A)^{\ast}=\frac{1-\alpha}{\eta\alpha}\phi(A)=\frac{\eta\overline{\alpha}}{\eta\alpha}\phi(A)=\frac{\overline{\alpha}}{\alpha}\phi(A).
$$
This proves the sufficiency. The necessity can be obtain by considering $\phi^{-1}$.

\noindent{\bf Claim 3.3.} Let $P$ be in $\mathcal{A}$, then $\frac{1}{\alpha}\phi(\alpha P)$ is a projection in $\mathcal{B}$ if and only if $P$ is a projection in $\mathcal{A}$.

To prove the sufficiency. We suppose that $P$ is a projection in $\mathcal{A}$. Since $(\alpha P)^{\ast}=\frac{\overline{\alpha}}{\alpha}(\alpha P)$, it follows Claim 3.2 that $\phi(\alpha P)^{\ast}=\frac{\overline{\alpha}}{\alpha}\phi((\alpha P))$. Hence $\frac{1}{\alpha}\phi(\alpha P)$ is selfadjoint. Furthermore,
$$
\begin{aligned}
\phi(\alpha P)=&\phi \left( p_n\left( I,\frac{I}{1+\eta}, \cdots, \frac{I}{1+\eta}, \frac{\alpha P}{1+\eta},\alpha P\right)\right)\\
=& p_n\left( \phi \left(I\right),\phi \left(\frac{I}{1+\eta}\right), \cdots, \phi \left(\frac{I}{1+\eta}\right), \phi \left(\frac{\alpha P}{1+\eta}\right),\phi \left( \alpha P\right)\right)\\
=& p_{n-1}\left( \phi \left(I\right),\phi \left(\frac{I}{1+\eta}\right), \cdots, \phi \left(\frac{I}{1+\eta}\right), \phi \left(\frac{\alpha P}{1+\eta}\right)\right)\Diamond_{\eta}\phi \left(\alpha P\right)\\
=&\phi(\alpha P)\Diamond_{\eta}\phi(\alpha P)\\
=&\phi(\alpha P)\phi(\alpha P)+\eta\phi(\alpha P)\phi(\alpha P)^{\ast}\\
=&\phi(\alpha P)^2+\eta\frac{\overline{\alpha}}{\alpha}\phi(\alpha P)^2=\frac{1}{\alpha}\phi(\alpha P)^2\\
\end{aligned}
$$
and then $\left(\frac{1}{\alpha}\phi(\alpha P)\right)^2=\frac{1}{\alpha}\phi(\alpha P)$. So $\frac{1}{\alpha}\phi(\alpha P)$ is a projection. This proves the sufficiency. The necessity can be showed by considering $\phi^{-1}$.
\end{proof}
\vspace{2mm}

\begin{lemma} \label{8}
$\phi$ is multiplicative and hence $\phi(I)=I$.
\end{lemma}

\begin{proof}
Suppose that $\phi$ is multiplicative. Taking $A$ form $\mathcal{A}$ such that $\phi(A)=I$, we have that $\phi(I)=\phi(I)\phi(A)=\phi(A)=I$.

Now we show that $\phi$ is multiplicative. To do this, we fix a projection $Q_{1}$ in $\mathcal{B}$ with $\underline{Q_{1}}=0$ 
and $\overline{Q_{1}}=I$.Then by (1) of Lemma \ref{7}, $P_{1}=\frac{1}{\alpha}\phi^{-1}(\alpha{Q_{1}})$
) is projection in $\mathcal{A}$. It is easy to see that $Q_{1}=\frac{1}{\alpha}\phi(\alpha{P_{1}})$.
Let $P_2=I-P_{1}$ and $Q_2=I-Q_{1}$. Then we obtain from Lemma \ref{7} that $Q_2=\frac{1}{\alpha}\phi(\alpha P_2)$.
Let $\mathcal{A}=\sum_{i,j=1}^2\mathcal{A}_{ij}$ and
$\mathcal{B}=\sum_{i,j=1}^2\mathcal{B}_{ij}$,
where $\mathcal{A}_{ij}=P_{i}\mathcal{A}P_{j}$ and
$\mathcal{B}_{ij}=Q_{i}\mathcal{B}Q_{j}$.

\noindent{\bf Claim 1}. $\phi(\mathcal{A}_{ij})=\mathcal{B}_{ij}$
for $1 \leqslant i\ne j \leqslant 2$.

Let $A_{12}$ be an arbitrary element in $\mathcal{A}_{12}$. Since
$$
\begin{aligned}
\phi(\alpha{A_{12}})& =\phi \left( p_n\left( I,\frac{I}{1+\eta}, \cdots, \frac{I}{1+\eta}, \frac{\alpha P_1}{1+\eta},A_{12}\right)\right)\\
&= p_n\left( \phi \left(I\right),\phi \left(\frac{I}{1+\eta}\right), \cdots, \phi \left(\frac{I}{1+\eta}\right), \phi \left(\frac{\alpha P_1}{1+\eta}\right),\phi \left( A_{12}\right)\right)\\
&= p_{n-1}\left( \phi \left(I\right),\phi \left(\frac{I}{1+\eta}\right), \cdots, \phi \left(\frac{I}{1+\eta}\right), \phi \left(\frac{\alpha P_1}{1+\eta}\right)\right)\Diamond_{\eta}\phi \left(A_{12}\right)\\
&=\phi(\alpha P_1)\Diamond_{\eta}\phi(A_{12})\\
&=\phi(\alpha P_1)\phi(A_{12})+\eta\phi(A_{12})\phi(\alpha P_1)^{\ast}\\
&=\phi(\alpha P_1)\phi(A_{12})+\frac{\overline{\alpha}\eta}{\alpha}\phi(A_{12})\phi(\alpha P_1)\\
&=\alpha{Q_{1}}\phi(A_{12})+\eta\overline{\alpha}\phi(A_{12})Q_{1},
\end{aligned}
$$
we have $Q_{2}\phi(\alpha{A_{12}})Q_{2}=0$. Similarly, we obtain from
$$
\begin{aligned}
\phi(\eta\overline{\alpha}A_{12})&=\phi \left( p_n\left( I,\frac{I}{1+\eta}, \cdots, \frac{I}{1+\eta}, \frac{\alpha P_2}{1+\eta},A_{12}\right)\right)\\
&= p_n\left( \phi \left(I\right),\phi \left(\frac{I}{1+\eta}\right), \cdots, \phi \left(\frac{I}{1+\eta}\right), \phi \left(\frac{\alpha P_2}{1+\eta}\right),\phi \left( A_{12}\right)\right)\\
&= p_{n-1}\left( \phi \left(I\right),\phi \left(\frac{I}{1+\eta}\right), \cdots, \phi \left(\frac{I}{1+\eta}\right), \phi \left(\frac{\alpha P_2}{1+\eta}\right)\right)\Diamond_{\eta}\phi \left(A_{12}\right)\\
&=\phi(\alpha P_2)\Diamond_{\eta}\phi(A_{12})\\
&=\phi(\alpha P_2)\phi(A_{12})+\eta\phi(A_{12})\phi(\alpha P_2)^{\ast}\\
&=\phi(\alpha P_2)\phi(A_{12})+\frac{\overline{\alpha}\eta}{\alpha}\phi(A_{12})\phi(\alpha P_2)\\
&=\alpha{Q_{2}}\phi(A_{12})+\eta\overline{\alpha}\phi(A_{12})Q_{2},
\end{aligned}
$$
we get that $Q_{1}\phi(\eta\overline{\alpha}A_{12})Q_{1}=0$.
Since $A_{12}$ is arbitrary, we have $\phi(A_{12})=B_{12}+B_{21}$ for some $B_{12}\in\mathcal{B}_{12}$ and $B_{21}\in\mathcal{B}_{21}$.

To prove $\phi(A_{12})\in\mathcal{B}_{12}$,we have to show that $B_{21}=0$. Since
$$
\begin{aligned}
0 &=\phi \left( p_n\left( I,\frac{I}{1+\eta}, \cdots, \frac{I}{1+\eta}, \frac{A_{12}}{1+\eta},\alpha P_1\right)\right)\\
&= p_n\left( \phi \left(I\right),\phi \left(\frac{I}{1+\eta}\right), \cdots, \phi \left(\frac{I}{1+\eta}\right), \phi \left(\frac{A_{12}}{1+\eta}\right),\phi \left( \alpha P_1\right)\right)\\
&= p_{n-1}\left( \phi \left(I\right),\phi \left(\frac{I}{1+\eta}\right), \cdots, \phi \left(\frac{I}{1+\eta}\right), \phi \left(\frac{A_{12}}{1+\eta}\right)\right)\Diamond_{\eta}\phi \left( \alpha P_1\right)\\
&=\phi(A_{12})\Diamond_{\eta}\phi(\alpha P_2)\\
&=\phi(A_{12})(\alpha Q_1)+\eta(\alpha Q_1)\phi(A_{12})^{\ast}\\
&=\alpha({B_{12}}+\eta{B_{21}}^*).
\end{aligned}
$$
So we have $B_{21}=0$, which implies $\phi(\mathcal{A}_{12}) \subseteq \mathcal{B}_{12}$. By considering $\phi^{-1}$, we can get $\phi(\mathcal{A}_{12})=\mathcal{B}_{12}$.

Similarly, we have $\phi(\mathcal{A}_{21})=\mathcal{B}_{21}$.

\noindent{\bf Claim 2}.$\phi(\mathcal{A}_{ii}) \subseteq \mathcal{B}_{ii} (i=1,2)$.

Let $A_{ii}$ be an arbitrary element in $\mathcal{A}_{ii}$. Then for $j \neq i$, we have
$$
\begin{aligned}
0 &=\phi \left( p_n\left( I,\frac{I}{1+\eta}, \cdots, \frac{I}{1+\eta}, \frac{\alpha P_j}{1+\eta},A_{ii}\right)\right)\\
&= p_n\left( \phi \left(I\right),\phi \left(\frac{I}{1+\eta}\right), \cdots, \phi \left(\frac{I}{1+\eta}\right), \phi \left(\frac{\alpha P_j}{1+\eta}\right),\phi \left( A_{ii}\right)\right)\\
&= p_{n-1}\left( \phi \left(I\right),\phi \left(\frac{I}{1+\eta}\right), \cdots, \phi \left(\frac{I}{1+\eta}\right), \phi \left(\frac{\alpha P_j}{1+\eta}\right)\right)\Diamond_{\eta}\phi \left( A_{ii}\right)\\
&=\phi(\alpha P_j)\Diamond_{\eta}\phi(A_{ii})\\
&=\phi(\alpha P_j)\phi(A_{ii}+\eta\phi(A_{ii}\phi(\alpha P_j)^{\ast}\\
&=\phi(\alpha P_j)\phi(A_{ii}+\frac{\overline{\alpha}\eta}{\alpha}\phi(A_{ii}\phi(\alpha P_j)\\
&=\alpha{Q_{j}}\phi(A_{ii})+\eta\overline{\alpha}\phi(A_{ii})Q_{j}.
\end{aligned}
$$
which implies that $Q_{j}\phi(A_{ii})Q_{i}=Q_{i}\phi(A_{ii})Q_{j}=0$ and
$\phi(A_{ii})=B_{11}+B_{22}$ for some $B_{11} \in \mathcal{B}_{11}$ and $B_{22} \in \mathcal{B}_{22}$.

For $j \neq i$ and $C_{ij} \in B_{ij}$, we obtain from Claim 1 that $\phi^{-1}(C_{ij}) \in A_{ij}$, thus
$$
\begin{aligned}
0 &=\phi \left( p_n\left( I,\frac{I}{1+\eta}, \cdots, \frac{I}{1+\eta}, \frac{\phi^{-1}(C_{ij})}{1+\eta},A_{ii}\right)\right)\\
&= p_n\left( \phi \left(I\right),\phi \left(\frac{I}{1+\eta}\right), \cdots, \phi \left(\frac{I}{1+\eta}\right), \phi \left(\frac{\phi^{-1}(C_{ij})}{1+\eta}\right),\phi \left( A_{ii}\right)\right)\\
&= p_{n-1}\left( \phi \left(I\right),\phi \left(\frac{I}{1+\eta}\right), \cdots, \phi \left(\frac{I}{1+\eta}\right), \phi \left(\frac{\phi^{-1}(C_{ij})}{1+\eta}\right)\right)\Diamond_{\eta}\phi \left( A_{ii}\right)\\
&=C_{ij}\Diamond_{\eta}\phi(A_{ii})\\
&=C_{ij}\phi(A_{ii}+\eta\phi(A_{ii}C_{ij}^{\ast}\\
&=C_{ij}B_{jj}+\eta B_{jj}){C_{ij}}^{\ast}.
\end{aligned}
$$
It follows from Lemma \ref{2} (1) that $B_{jj}=0$. So we have $\phi(A_{ii})=B_{ii} \subseteq \mathcal{B}_{ii}$.

\noindent{\bf Claim 3}.$\phi$ is multiplicative.

 Since $\phi$ is additive and $\phi(I) = I$. For $A,B \in \mathcal{A}$,we write them as 
 $A = \sum_{i.j=1}^{2}A_{ij}$ and $B =\sum_{i.j=1}^{2}B_{ij}$, where $A_{ij},B_{ij} \in \mathcal{A}_{ij}$. 
 Since $\phi$ is additive, to prove $\phi(AB) = \phi(A)\phi(B)$, it suffices to show that 
 $\phi(A_{ij}B_{kl}) = \phi(A_{ij})\phi(B_{kl})$ for any $i,j,k,l \in \{1,2\}$. If $j \ne k$, then we 
 obtain from Claims 1 and 2 in Section \ref{xxsec4} that $\phi(A_{ij}B_{kl}) =  \phi(A_{ij})\phi(B_{kl}) =0$, thus we 
 just need to consider the cases with $j = k$.

 By the above two claims, we have $\phi(B_{12})\phi(A_{11})^* = 0$, which implies that

 $$
 \begin{aligned}
 \phi(A_{11}B_{12})=&\phi \left( p_n\left( I,\frac{I}{1+\eta}, \cdots, \frac{I}{1+\eta}, \frac{A_{11}}{1+\eta},B_{12}\right)\right)\\
=& p_n\left( \phi \left(I\right),\phi \left(\frac{I}{1+\eta}\right), \cdots, \phi \left(\frac{I}{1+\eta}\right), \phi \left(\frac{A_{11}}{1+\eta}\right),\phi \left(B_{12}\right)\right)\\
=& p_{n-1}\left( \phi \left(I\right),\phi \left(\frac{I}{1+\eta}\right), \cdots, \phi \left(\frac{I}{1+\eta}\right), \phi \left(\frac{A_{11}}{1+\eta}\right)\right)\Diamond_{\eta}\phi \left( B_{12}\right)\\
=&\phi(A_{11})\Diamond_{\eta}\phi(B_{12})\\
=&\phi(A_{11})\phi(B_{12})+\eta\phi(B_{12})\phi(A_{11})^{*}\\
=&\phi(A_{11})\phi(B_{12}).
 \end{aligned}
 $$

Similarly, we can prove that $\phi(A_{22}B_{21})=\phi(A_{22})\phi(B_{21})$.

 For $D_{12} \in \mathcal{B}_{12}$, we have $C_{12}=\phi^{-1}(D_{12}) \in \mathcal{A}_{12}$ by Claim 1. Therefore
 $$
 \phi(A_{11}B_{11})D_{12}=\phi(A_{11}B_{11}C_{12})=
 \phi(A_{11})\phi(B_{11}C_{12})=\phi(A_{11})\phi(B_{11})D_{12}.
 $$
for all $D_{12} \in \mathcal{B}_{12}$. Since $\underline{Q_{1}} = 0$ and $\overline{Q_{1}} =I$,we obtain from Lemma \ref{2} and Claim 2 in Section \ref{xxsec4} that $\phi(A_{11}B_{11}) = \phi(A_{11})\phi(B_{11})$. 

Similarly, we have $\phi(A_{22}B_{22}) = \phi(A_{22})\phi(B_{22})$.

 Since  $\phi(B_{21}){\phi(A_{12})}^{*} = 0$ by Claim 1 in Section \ref{xxsec4}, we have
$$
 \begin{aligned}
  \phi(A_{12}B_{21})=&\phi \left( p_n\left( I,\frac{I}{1+\eta}, \cdots, \frac{I}{1+\eta}, \frac{A_{12}}{1+\eta},B_{21}\right)\right)\\
=& p_n\left( \phi \left(I\right),\phi \left(\frac{I}{1+\eta}\right), \cdots, \phi \left(\frac{I}{1+\eta}\right), \phi \left(\frac{A_{12}}{1+\eta}\right),\phi \left(B_{21}\right)\right)\\
=& p_{n-1}\left( \phi \left(I\right),\phi \left(\frac{I}{1+\eta}\right), \cdots, \phi \left(\frac{I}{1+\eta}\right), \phi \left(\frac{A_{12}}{1+\eta}\right)\right)\Diamond_{\eta}\phi \left( B_{21}\right)\\
=&\phi(A_{12})\Diamond_{\eta}\phi(B_{21})\\
=&\phi(A_{12})\phi(B_{21})+\eta\phi(B_{21})\phi(A_{12})^{*}\\
=&\phi(A_{12})\phi(B_{21}).
 \end{aligned}
 $$
 
 Similarly we have $\phi(A_{21}B_{12}) = \phi(A_{21}\phi(B_{12})$.

 For $D_{21} \in \mathcal{B}_{21}$, we have $C_{21}=\phi^{-1}(D_{21}) \in \mathcal{A}_{21}$ by Claim 1 in Section \ref{xxsec4}. Therefore
 $$
 \phi(A_{12}B_{22})D_{21}=\phi(A_{12}B_{22}C_{21})=
 \phi(A_{12})\phi(B_{22}C_{21})=\phi(A_{12})\phi(B_{22})D_{21}.
 $$
 for all $D_{21} \in \mathcal{B}_{21}$. Since $\underline{Q_{1}} = 0$ and $\overline{Q_{1}} = I$,we know by Lemma 
 \ref{2} and Claim 2 in Section \ref{xxsec4} that $\phi(A_{12}B_{22}) = \phi(A_{12})\phi(B_{22})$.

 Similarly, we have $\phi(A_{22}B_{21}) = \phi(A_{22})\phi(B_{21})$. 

\end{proof}

\begin{lemma}\label{xxsec3.4} 
We have
\begin{enumerate}
\item[(1)] $\phi(\alpha{A}) = \alpha\phi(A)$ for each $A \in \mathcal{A}$;

\item[(2)] If $A \in \mathcal{A}$ is selfadjoint, then $\phi(A)$ is selfadjoint;

\item[(3)]  $\phi$ is real linear.
\end{enumerate}
\end{lemma}

 \begin{proof}
 (1) For $A \in \mathcal{A}$, we know by the above two lemmas that
 $$
 \phi(\alpha{A}) = \phi((\alpha{I})A) = \phi(\alpha{I})\phi(A) = \alpha\phi(A).
 $$

 (2) Suppose that $A \in \mathcal{A}$ is selfadjoint, then
 $(\alpha{A})^{\ast} = \frac{\overline{\alpha}}{\alpha}(\alpha{A})$.
 Thus we know from (1) and Lemma \ref{7} (2) that
 $$
 \overline{\alpha}\phi(A)^\ast = (\alpha(\phi(A))^{\ast}=
 \phi(\alpha{A})^\ast=\frac{\overline{\alpha}}{\alpha}
 \phi(\alpha{A}) = \overline{\alpha}\phi(A).
 $$
 so $\phi(A)$ is selfadjoint.

 (3) Let $A$ be a positive element in $\mathcal{A}$. Then we have $A = C^{2}$ for some selfadjoint element $C \in \mathcal{A}$. Hence
 $\phi(A) = {\phi(C)}^{2}$. Since $\phi(C)$ is selfadjoint, $\phi(A)$ is positive, which implies that $\phi$ preserves positive elements.

 Now let $a$ be a real number. Choose sequences $\{b_{n}\}$ and $\{c_{n}\}$ of rational numbers 
 such that $b_{n}  \leqslant a  \leqslant c_{n}$ for all $n$ and $\text{lim}_{n \to \infty}b_{n}= \text{lim}_{n \to \infty}c_{n} = a$.
 Since $b_{n}I  \leqslant aI  \leqslant c_{n}I$ and $\phi$ preserves positive elements, we
 know that $b_{n}I  \leqslant \phi(aI)  \leqslant c_{n}I$. Since $\mathcal{A}$ is a von 
 Neumann algebra, after taking the limit, we know that $\phi(aI) = aI$. Hence 
 for $A \in \mathcal{A}$, we have $\phi(aA) = \phi((aI)A) = \phi(aI)\phi(A) = a\phi(A)$.
 \end{proof}

\begin{lemma}\label{xxsec3.5} 
Suppose that  $\eta \notin \mathbb{R}$. Then $\phi$ is linear.
 \end{lemma}

 \begin{proof} 
 From the proof of Lemma \ref{7}, we know that $\alpha \notin\mathbb{R}$. 
 Let $\alpha = a+b\text{i}$ for some $a,b \in \mathbb{R}$. Then $b \neq 0$. For $A \in \mathcal{A}$, 
 we obtain from Lemma \ref{xxsec3.4} (3) that
 $$
 a\phi(A)+b\phi(\text{i}A) = \phi((a+b\text{i})A) = (a+b\text{i})\phi(A).
 $$
 Thus we have $\phi(\text{i}A) =\text{i}\phi(A)$. This together with Lemma \ref{xxsec3.4} shows that $\phi$ is linear.  
 \end{proof}

\begin{lemma}\label{xxsec3.6}
 For all $A \in \mathcal{A},\phi({A}^{\ast}) = {\phi(A)}^{\ast}$.
 \end{lemma}

 \begin{proof} 
 For $A \in \mathcal{A}$, we know by the above two lemmas and the additivity of $\phi$ that
 $$
 \begin{aligned}
\phi(A)+\eta\phi(A^{\ast}) &= \phi(A)+\phi(\eta{A}^{\ast})\\
 &= \phi(AI+\eta I{A}^{\ast})\\
&=\phi \left( p_n\left(I,\frac{I}{1+\eta}, \cdots, \frac{I}{1+\eta}, \frac{A}{1+\eta},I\right)\right)\\
&= p_n\left( \phi \left(I\right),\phi \left(\frac{I}{1+\eta}\right), \cdots, \phi \left(\frac{I}{1+\eta}\right), \phi \left(\frac{A}{1+\eta}\right),\phi \left(I\right)\right)\\
&= p_{n-1}\left( \phi \left(I\right),\phi \left(\frac{I}{1+\eta}\right), \cdots, \phi \left(\frac{I}{1+\eta}\right), \phi \left(\frac{A}{1+\eta}\right)\right)\Diamond_{\eta}\phi \left( I\right)\\
 & = \phi(A)\Diamond_{\eta}I\\
 &=\phi(A)+\eta{\phi(A)}^{\ast}.
 \end{aligned}
 $$
 Thus we have $\phi(A^{\ast}) = {\phi(A)}^{\ast}$.  
 \end{proof}

\begin{lemma}\label{xxsec3.7} 
$\eta \in \mathbb{R}$, and there is a central projection $E \in \mathcal{A}$ such that the restriction of $\phi$ to $\mathcal{A}E$ is linear and the restriction of $\phi$ to $\mathcal{A}(I-E)$ is conjugate linear.
\end{lemma}

\begin{proof}
 By Lemma \ref{8}, ${\phi(\text{i}I)}^{2} =\phi({\text{i} I}^{2}) = -\phi(I) = -I$. By Lemma \ref{xxsec3.6}, 
 ${\phi(\text{i}I)}^{\ast} =\phi({\text{i} I}^{\ast}) = -\phi(\text{i} I)$.  Let
 $F = \frac{I-\text{i}\phi(\text{i}I)}{2}$. Then it is easy to verify that $F$ is a central projection in $\mathcal{B}$. Let $E = {\phi(F)}^{-1}$. 
 From Lemma \ref{xxsec3.4} (2) show that $E^{\ast} = E$. 

On the other hand, by Lemma \ref{8}
  $$
 E^{2} = ({\phi^{-1})(F)}^{2} = {\phi^{-1}(F)}{\phi^{-1}(F)} = {\phi^{-1}(F^{2})} = {\phi^{-1}(F)} =E.
 $$
So $E$ is a projection. For any $B \in \mathcal{B}$, $BF = FB$, then ${\phi^{-1} (BF)}= {\phi^{-1}(FB)}$.
This together with Lemma \ref{8} shows that $E \in \mathcal{Z}(\mathcal{A})$. So $E$ is a 
central projection in $\mathcal{A}$. Moreover, for $A \in \mathcal{A}$, we have
 $$
 \phi(\text{i}AE) = \phi(A)\phi(E)\phi(\text{i}I) = \text{i}\phi(A)F = \text{i}\phi(AE),
 $$
 and
 $$
 \phi(\text{i}A(I-E)) = \phi(A)\phi(I-E)\phi(\text{i}I) =\text{i}\phi(A)(I-F) = -\text{i}\phi(A(I-E)).
 $$
 Thus the restriction of $\phi$ to $\mathcal{A}E$ is linear and the restriction of $\phi$ to $\mathcal{A}(I-E)$ is conjugate linear. 
\end{proof}
 The proof of Theorem \ref{Th2} follows now from the above results.
\end{proof}

\bigskip

\section{Potential Topics for the Future Research}
\label{xxsec5}


\begin{thebibliography}{10}


\bibitem[1]{Semrl}
P. \v Semrl,
{\em Quadratic and quasi-quadratic functions}, Proc. Amer. Soc., \textbf{119} (1993), 1105-1113.



\bibitem[2]{Bai} Z.F.Bai and S.P.Du,
{\em Maps preserving products $XY-YX^{\ast}$ on von Neumann algebras}, J. math. Anal. Appl.,
\textbf{386} (2012), 103-109.




\bibitem[3]{Li} C. J. Li,F.Y. Lu and X. C. Fang, 
{\em Nonlinear mappings preserving product $XY+YX^\ast$ on factor von Neumann algebras}, Linear Algebra Appl., \textbf{438} (2013), 2239-2345.




\bibitem[4]{Dai}
L. Q. Dai and F. Y. Lu,
{\em Nonlinear maps preserving Jordan $\ast$-priducts},
J. Math. Anal. Appl., \textbf{409} (2014), 180-188.




\bibitem[5]{Cui}
J.-L. Cui and C.-K. Li,
{\em Maps preserving product $XY-YX^\ast$ on factor von Neumann algebras}, Linear Algebra Appl., \textbf{431} (2009), 833-842.


\bibitem[6]{Huo1}
D.H. Huo, B.D. Zheng, J. L. Xu and H.Y. Liu,
{\em Nonlinear maps preserving Jordan multiple $\ast$-product on factor von Neumann algebras}, Linear Multilinear Algebra, \textbf{63} (2015), 1026-1036.


\bibitem[7]{Ji}
P. S. Ji and Z. Y. Liu,
{\em Additivity of Jordan maps on standard Jordan operator algebras},
Linear Algebra Appl., \textbf{430} (2009), 335-343.



\bibitem[8]{Lu}
F. Y. Lu,
{\em Additivity of Jordan maps on standard operator algebras},
Linear Algebra Appl., \textbf{357} (2002), 123-131.


\bibitem[9]{Huo2}
D.H. Huo, B.D. Zheng and H.Y. Liu,
{\em Nonlinear maps preserving Jordan triple $\eta$-$\ast$-products}, J. Math. Anal. Appl., \textbf{430} (2015), 830-844.

\bibitem[10]{Miers}
C. R. Miers,
{\em Lie homomorphisms of operator algebras}, Pacific J. Math., \textbf{38} (1971), 717-735.

\end{thebibliography}
\end{document}